\documentclass{siamltex}

\usepackage{epsf,latexsym,amsfonts,amssymb,subfigure,mathrsfs,graphicx,stmaryrd,
amsmath}
\usepackage{color}

\newtheorem{remark}[theorem]{Remark}
\newtheorem{example}[theorem]{Example}
\numberwithin{equation}{section}


\newcommand{\mc}[1]{\mathcal{#1}}
\newcommand{\ms}[1]{\mathscr{#1}}

\newcommand{\mr}[1]{\mathrm{#1}}

\newcommand{\ov}[1]{\overline{#1}}
\newcommand{\wt}[1]{\widetilde{#1}}
\newcommand{\nm}[2]{\left\|#1\right\|_{#2}}
\newcommand{\diff}[2]{\dfrac{\pa #1}{\pa #2}}
\newcommand{\Lr}[1]{\left(#1\right)}

\def\tube{Z_{\text{bl}}}

\newcommand{\abs}[1]{\left\lvert#1\right\rvert}
\newcommand{\set}[2]{\left\{\,#1\,\mid\,#2\,\right\}}
\newcommand{\aver}[1]{\langle\,#1\,\rangle}
\def\negint{{\int\negthickspace\negthickspace\negthickspace
\negthinspace -}}
\newcommand{\wh}[1]{\widehat{#1}}

\newcommand*{\avint}{\mathop{\ooalign{$\int$\cr$-$}}}

\def\R{\mathbb R}
\def\Om{\Omega}

\def\eps{\varepsilon}
\def\ga{\gamma}
\def\pa{\partial}
\def\Ga{\Gamma}

\def\na{\nabla}
\def\Oms{\Om_{\eps}}
\def\md{\,\mathrm{d}}
\def\dx{\,\mathrm{d}x}
\def\dy{\,\mathrm{d}y}

\def\ds{\,\mathrm{d}s}
\def\dsx{\,\mathrm{d}\sigma(x)}
\def\dsxi{\,\mathrm{d}\sigma(\xi)}
\def\uu{u^\eps}
\def\hu{u^0}
\def\corr{u^{\text{cr}}}
\def\uph{\Phi_{p,\tau_{\eps}}^{\mr{MS}}}

\def\hg{\aver{g}}
\newcommand{\nn}{\nonumber}

\def\xxe{x/\eps}
\def\cut{\rho_\eps}
\def\tri{\triangle}

\begin{document}

\title
{A multiscale finite element method for oscillating Neumann problem on rough domain}%
\author{Pingbing Ming
\thanks{LSEC, Institute of Computational
  Mathematics and Scientific/Engineering Computing,
  AMSS, Chinese Academy of Sciences,
  No. 55, Zhong-Guan-Cun East Road,
  Beijing 100190, China. ({\tt mpb@lsec.cc.ac.cn}) {The work of Ming was partially
  supported by the National Natural Science Foundation of China for
Distinguished Young Scholars 11425106, and National Natural Science Foundation of China grants 91230203, and by the funds from Creative Research Groups of China through grant 11021101, and by the support of CAS NCMIS.}}
  \and{Xianmin Xu}
  \thanks{LSEC, Institute of Computational
  Mathematics and Scientific/Engineering Computing,
NCMIS,  AMSS, Chinese Academy of Sciences,
  No. 55, Zhong-Guan-Cun East Road,
  Beijing 100190, China. ({\tt xmxu@lsec.cc.ac.cn})} X. Xu acknowledges the financial support by SRF for ROCS, SEM and by NSFC grant 11571354. }
\date{\today}
\maketitle

\begin{abstract}
We develop a new multiscale finite element method for
Laplace equation with oscillating Neumann boundary
conditions on rough boundaries.
The key point is the introduction of a new boundary condition that incorporates both the microscopically geometrical and physical information of the rough boundary. Our approach applies to problems posed on domain with rough boundary as well as oscillating boundary conditions.
We prove the method has linear convergence rate in the energy norm with a weak resonance term for periodic roughness. Numerical results are reported for both periodic and nonperiodic roughness.
\end{abstract}
\begin{keywords}
Multiscale finite element method; Rough boundary; Homogenization
\end{keywords}

\begin{AMS}
 65N30; 74Q05.
\end{AMS}
\section{Introduction}
Many problems in nature and industry applications are described by
partial differential equations in domain with multiscale boundary~\cite{casado2013asymptotic, JagerMikelic:2001}.
Some of them even have oscillatory Neumann or Robin boundary conditions on the multiscale boundary~\cite{Gobbert:1998, XuWang:2010}. Theoretical study for problems with rough boundary
and oscillating boundary data mainly concerns the effective boundary conditions, which may be traced back to~\cite{KohlerPapanicalouVaradhan:1981}.
There are extensive work thereafter devoted to various topics in this field, including Poisson problem, eigenvalue problems, and Navier-Stokes equations with different types of boundary conditions; see~\cite{amirat2013effective, bucur2008asymptotic, Friedman:1997,Madureira:2007, Mikelic:2009, NeussNeussRaduMikelic:2006, NevardKeller:1997,
OleinikShamayevYosifian:1992} and the references therein. 

Compared to the extensive theoretical study, numerical methods for the rough boundary problem have been less developed, while many numerical methods have been devoted to solve elliptic problems with rough coefficients~\cite{ Babuska:1971, Babuslka:1976, BabuskaOsborn:1983, babuvska1994special,hughes1995multiscale, HouWu:1997, HouWuCai:1999, EEnquist:2003, EMingZhang:2004, OwhadiZhang:2007}. We refer to~\cite[Chapter 8]{Ebook:2011} and \cite{efendiev2009multiscale} for a review on this active field.
Only recently, a multiscale finite element method (MsFEM) was introduced to solve Laplace equation with homogeneous Dirichlet boundary value on rough domain~\cite{Madureira:2008}. The multiscale basis functions are constructed for the elements near the rough boundary by solving a cell
problem with the homogeneous Dirichlet condition on the rough edge and with linear nodal basis function as boundary condition on other edges, just as the standard MsFEM~\cite{HouWu:1997}.
However, this approach cannot be applied either to problems with non-Dirichlet boundary conditions over rough boundary, or to problems with inhomogeneous Dirichlet boundary value over the rough boundary. Therefore, one of our motivations is to develop a multiscale method for problem with oscillating boundary condition given on the rough boundary.

We introduce a new multiscale finite element method for
Laplace equation with oscillating boundary flux on the rough boundary. A Neumann boundary condition that depends on the magnitude of the flux
oscillation has to be imposed on the local cell problem posed on elements with rough edge.
When the flux oscillation is of the same order of the roughness parameter, the boundary condition contains only
the microscopical geometry of the rough boundary. Otherwise, one has to
incorporate both the microscopical geometry of the rough boundary and the flux oscillation into the boundary condition. Such multiscale basis function coincides with the linear nodal
basis functions for elements without a rough edge. This method is H$^1$-conforming,
with degrees of freedom at the mesh nodes and the basis functions are solved over
the elements near the rough boundary and can be computed off line. For periodic roughness, we prove that our method has optimal convergence
rate in the energy norm besides a weak resonance term. The method also applies to problems
with non-periodic roughness as demonstrated by the numerical experiments. The proof is based on certain homogenization
results for Neumann rough boundary value problems, which refine the corresponding results in~\cite{Friedman:1997} by clarifying the dependence of the error bounds on the domain size.
Our convergence results require that
the right-hand side function $f\in H^1$. However, numerical experiments show that the optimal convergence order is retained for even rougher L$^2$ right-hand side function.

The novelty of the proposed method is that both the rough boundary
and the oscillating flux are considered and no structure is assumed for the oscillations.
The method can be naturally generalized to the inhomogeneous Dirichlet boundary value problem over rough domain. It is also possible
to combine the proposed method with the standard MsFEM to deal with the problems
with oscillatory coefficients and oscillatory boundary data. 

The so-called composite finite elements has been successfully applied to
solve boundary value problems over
complicated domain~\cite{HackbuschSauter:1997, HackbuschSauter:1999, peterseim2011}. The basic idea of this method
is to incorporate the geometrical complexity of the domain into the basis function, while there is no local cell problems. Optimal convergence rate has been achieved, which is independent of the geometrical structure of the domain~\cite{HackbuschSauter:1997, NicaiseSauter:2006, rech2006}. In particular, homogeneous Neumann boundary value problems have been studied in~\cite{HackbuschSauter:1997} and~\cite{NicaiseSauter:2006}.  In contrast to composite finite elements, the multiscale basis function is constructed by solving local problem that contains both the geometry complexity and the oscillation of the boundary flux.

A multiscale method has been developed for elliptic equation with homogeneous boundary condition on complicated domains very recently in~\cite{Elfverson15}. The method is based on the localized orthogonal
decomposition (LOD) technique developed in~\cite{Malqvist11, Elfverson13, Malqvist14, Henning14, Malqvist15}.
The method is quite general and has optimal convergence order.
The difference between LOD method and the proposed method is the way in dealing with the multiscale basis function near the rough boundary.  In the LOD method, cell problems
are solved in several layers of elements near the boundary. In our method, the multiscale basis functions are solved only in cells with rough boundary. When the underlying scales of the problem are well-separated, MsFEM would be less expensive. Furthermore, we study the inhomogeneous oscillating flux on the boundary while only homogeneous boundary conditions have been treated in~\cite{Elfverson15}. In addition, we note that the heterogeneous multiscale method and MsFEM have been employed to solve partial differential equations on a rough surface in~\cite{abdulle2005heterogeneous} and~\cite{efendiev2013multiscale}, respectively, however, the authors have not  dealt with the problems studied in this paper.

Finally, the problem with complicated domain can be discretized by adaptive finite element method~\cite{Verfurth96}.
{However, the mesh size of the adaptive method must be much smaller than the characteristic length scale of the roughness for the sake of resolution, which would results in a linear algebraic system with large condition number. In contrast to the adaptive method, the condition number of the resulting linear system of the proposed method is proportional to
$\mc{O}(h^{-2})$ as demonstrated in the numerical experiment, while the coarse grid size $h$ is not necessarily smaller than the roughness length scale.

The structure of the paper is as follows. In Section 2,
we describe the model problem and introduce the multiscale finite element method.
In Section 3, we revisit the homogenization results for a Possion equation with an oscillatory Neumann boundary condition over rough domain. In Section 4, we estimate the convergence rate in energy norm of the proposed method. Numerical examples are illustrated in the last section.

\section{The Model Problem and Multiscale Finite Element Method}
Let $\Oms\subset \R^2$ be a bounded domain with boundary $\pa\Oms$, a part of which is rough and denoted as
$\Ga_\eps$, where $\eps$ is a small parameter that characterizes the roughness of $\Gamma_\eps$.
We consider a model problem with Neumann boundary conditions on $\Ga_\eps$: Given the source term $f$ and the flux $g_\eps$ that is oscillatory, we find $\uu$ satisfying
\begin{equation}\label{eq:modelproblem}
\left\{
\begin{aligned}
-\Delta \uu& =f(x),\qquad  &&\hbox{in $\Oms$,} \\
\uu &=0,\qquad  &&\hbox{on $\Ga_D$,} \\
\diff{\uu}{n}&=g_{\eps}(x),\qquad    &&\hbox{on $\Ga_\eps$,}
\end{aligned}\right.
\end{equation}
where $\Ga_D=\partial\Oms\setminus\Ga_\eps$.

For any measurable subset $D$ of $\Oms$, we define
\[
V(D)=\set{v\in H^1(D)}{v|_{\pa D\setminus\Gamma_{\eps}}=0}.
\]
Here $H^1(D)$ is the standard Sobolev space, and the notations and definitions for
Sobolev spaces can be found in~\cite{AdamsFournier:2003}. To clarify the dependence of the roughness parameter $\eps$, we denote $\uu$ the solution
of Problem~\eqref{eq:modelproblem}, whose weak form is:
Find $\uu\in V(\Oms)$ such that
\begin{equation}\label{eq:weakform}
a(\uu,v)=(f,v)+(g_\eps,v)_{\Ga_\eps} \qquad \text{for all\quad} v\in V(\Oms),
\end{equation}
where
\[
a(\uu,v):=\int_{\Oms}\na\uu\cdot \na v\dx, \quad (f,v):=\int_{\Oms}fv\dx, \quad
(g_\eps,v)_{\Gamma_{\eps}}:=\int_{\Gamma_{\eps}}g_\eps v\ds.
\]

We triangulate $\Oms$ by a shape regular mesh $\mc{T}_h$ in the sense of~\cite{Ciarlet:1978},
with element $\tau \in\mc{T}_h$ be either a triangle or a
quadrilateral, where $h=\max_{\tau\in\mathcal{T}_h}h_{\tau}$ with $h_{\tau}$ the diameter of $\tau$,
and $\ms{S}(\tau)$ is a suitable index set for nodes in $\tau$. We assume that an element
near $\Ga_{\eps}$ has at most one rough edge on the rough boundary and denote such elements by $\tau_{\eps}$; see Fig.~\ref{fig:partition}. For triangular mesh, there are elements that may have only one node
on the rough boundary. In Section 5, we shall give more details on the triangulation.
 \begin{figure}[htbp]  \hspace{-1cm}
\resizebox{!}{5cm}
{\subfigure[rectangular mesh]{%
\includegraphics[height=5.0cm]{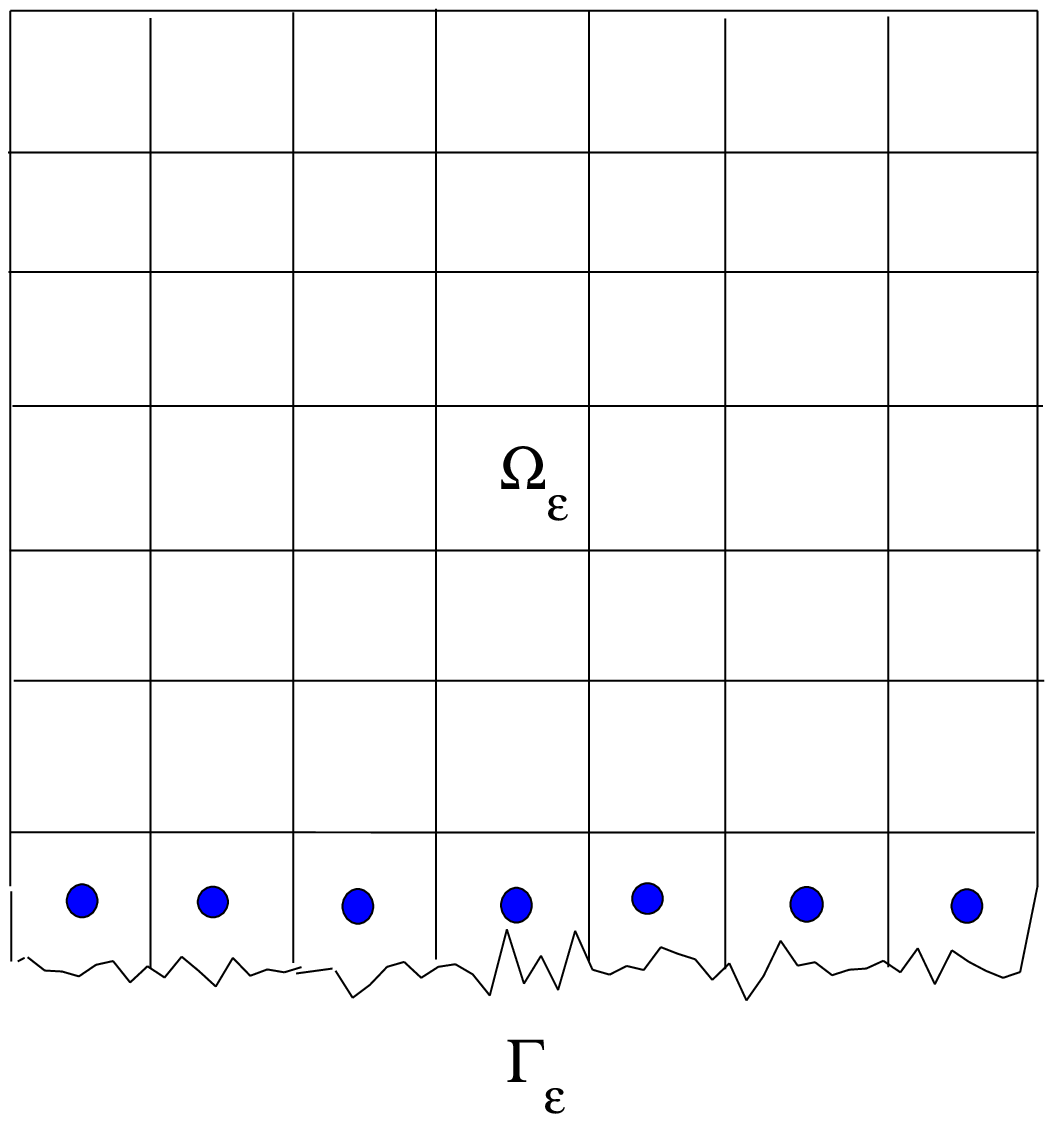}}}%
  \subfigure[triangular mesh]{%
  \includegraphics[height=5cm]{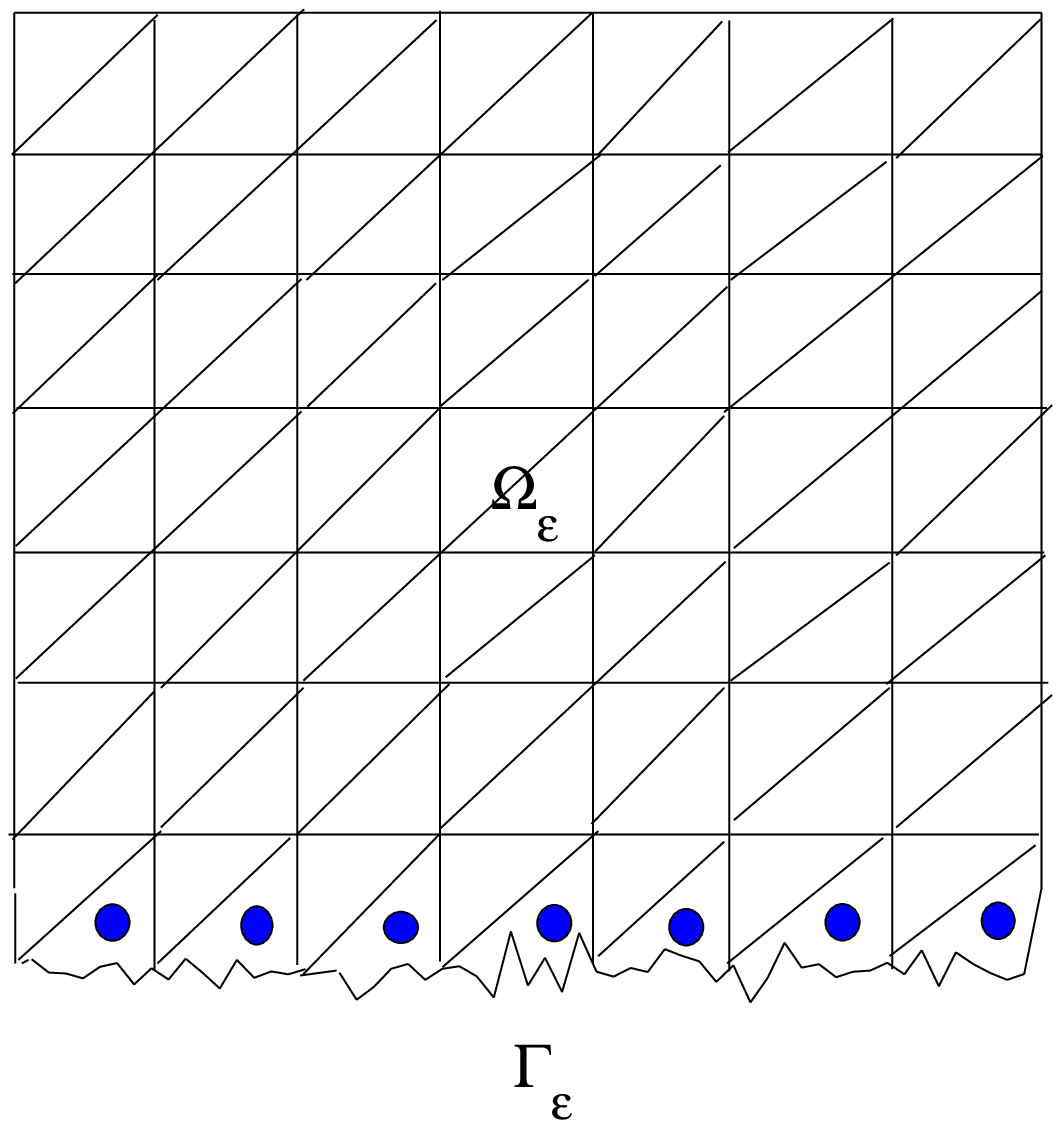}}%
\caption{Examples for the triangulation of the domain $\Omega_{\eps}$.}
\label{fig:partition}
\end{figure}

For any measurable subset $D$ of $\Oms$, we define a localized version of $a$ as
\[
a_D(v,w):= \int_D\na v\cdot\na w\dx\qquad v,w\in H^1(D).
\]

For each $p\in\ms{S}(\tau)$ we construct nodal basis functions $\Phi_{p}^{\mr MS}$, whose restriction to each element $\tau$ is denoted by $\Phi_{p,\tau}^{\mr MS}$, which satisfies
\begin{equation}\label{eq:cellpb}
a_{\tau}(\Phi_{p,\tau}^{\mr MS},v)=(\theta_{p,\tau},v)_{\partial\tau\cap\Gamma_{\eps}} \quad \hbox{for all } v\in V(\tau),
\end{equation}
and $\Phi_{p}^{\mr MS}$ is supplemented with the boundary condition
\begin{equation}\label{eq:basisbc}
\Phi^{\mr MS}_{p,\tau}=\phi_{p,\tau}\quad\text{on\quad}\pa\tau\setminus\Gamma_{\eps},
\qquad \Phi^{\mr MS}_{p,\tau}(x_q)=\delta_{p,q}\quad \text{for
all\quad}p,q\in\ms{S}(\tau),
\end{equation}
where $\phi_{p,\tau}$ is the restriction of the standard linear nodal basis function $\phi_p$ on $\tau$.

The flux $\theta_{p,\tau_\eps}$ is defined as follows. If $\tau$ has no edge on $\Ga_\eps$, then
we let $\theta_{p,\tau}=0$. Problem~\eqref{eq:cellpb} changes to a Dirichlet boundary value problem with a unique solution $\Phi_{p,\tau}^{\mr MS}=\phi_{p,\tau}$, i.e., the multiscale basis function coincides with
the linear basis function. If $\tau_\eps$ has one edge on $\Ga_\eps$, then
\begin{equation}\label{eq:cellbcd}
\theta_{p,\tau_\eps}(x):=\left\{
\begin{aligned}
\dfrac{\pa_n\phi_{p,\tau_0}}{r}&\qquad\text{if\;}\|g_\eps-\aver{g_\eps}\|_{L^\infty(\pa\tau_\eps)}\le C\eps,\\
\dfrac{\pa_n\phi_{p,\tau_0}}{r}\dfrac{g_\eps(x)}{\aver{g_\eps}}&\qquad\text{otherwise.}
\end{aligned}\right.
\end{equation}
In this case, the local problem has mixed boundary conditions.
Here the parameter $r=\abs{s_{\eps}}/\abs{s_0}$ 
with $s_\eps$ being the rough edge of $\tau_\eps$, while $s_0$ being the homogenized rough edge, and $n$ is the unit outer normal of $s_0$, $\tau_0$ is the homogenized element of $\tau_\eps$ and $\aver{g_\eps}=\avint_{s_{\eps}} g_\eps$ is the mean of $g_\eps$ over $s_\eps$.

The flux $\theta_{p,\tau_\eps}$ defined in~\eqref{eq:cellbcd}$_2$ contains both geometrical and physical information of the boundary.
If the flux has no oscillations, i.e., $g_\eps=\aver{g_\eps}$, then~\eqref{eq:cellbcd}$_2$ changes to~\eqref{eq:cellbcd}$_1$, and the flux does not contain the physical information any more. Furthermore, if the boundary is also flat, i.e., $r=1$, then the unique solution of the cell problem is the linear nodal basis functions, and the method automatically changes to the standard finite element method.

The bound $C\varepsilon$ in~\eqref{eq:cellbcd} is a threshold for determining whether the physical information should be incorporated into
the cell problem. Roughly speaking, if $\|g_\eps-\aver{g_\eps}\|_{L^\infty(\pa\tau_\eps)}$ is as small as $\mc{O}(\varepsilon)$, then
we need not any physical information but the geometrical information of the rough boundary. Otherwise, the physical information should be incorporated into the cell problem. This is consistent with our intuition as seen from the example below. 
If there is no information on $\eps$, we can use~\eqref{eq:cellbcd}$_2$ whenever $\aver{g_\eps}\neq 0$.
\begin{example}
If $g_\eps(x)=\eps \sin(x/\eps)$, it is clear that $\|g_\eps-\aver{g_\eps}\|_{L^\infty}=\varepsilon$, then we may use~\eqref{eq:cellbcd}$_1$. On the other hand,
if $g_\eps(x)=1+ \sin(x/\eps)$, then $\|g_\eps-\aver{g_\eps}\|_{L^\infty}=1$, and we have to use~\eqref{eq:cellbcd}$_2$. There are some special cases beyond~\eqref{eq:cellbcd}, e.g., $g_\eps(x)= \sin(x/\eps)$ so that $\|g_\eps\|_{\infty}=1$ but $\aver{g_\eps}=0$.
In this case, the method works as well if we decompose $g_\eps$ as $g_\eps= g_1+g_2$ with $g_1{:}=1$ and $g_2{:}= \sin(x/\eps)- 1$ and split Problem~\eqref{eq:modelproblem} into two problems with boundary conditions $g_1$ and $g_2$, respectively. We would like to emphasis that this splitting technique does not apply to the nonlinear problems because
the flux is not well-defined when $\aver{g_\eps}=0$ and $\|g_\eps\|_{L^\infty}=\mc{O}(1)$. Nevertheless, it may be directly applied to
the corresponding linearized problems.
\end{example}

Under the conditions~\eqref{eq:cellpb},~\eqref{eq:basisbc} and~\eqref{eq:cellbcd}, we have, for all
$\tau\in\mathcal{T}_h$,
\[
\sum_{p\in\ms{S}(\tau)}\Phi_{p,\tau}^{\mr MS}=1.
\]
The basis function $\phi_{p}^{\mr MS}$ is continuous across the element boundary so that
\[
V_{h}{:}=\text{span}\set{\phi_{p}^{\mr MS}}{ p\in\ms{S}(\Oms)}\subset V(\Oms).
\]
The MsFEM approximation of Problem~\eqref{eq:modelproblem} is
to find $u_h\in V_h$ such that
\begin{equation}\label{eq:MsFE}
a(u_h,v)=(f,v)+(g_\eps,v)_{\Ga_\eps}\qquad\text{for all } v\in V_h.
\end{equation}
This is a conforming method, and the existence and uniqueness of the solution follow
from Lax-Milgram theorem. Moreover, we have 
\begin{equation}\label{eq:cea}
\nm{\na(\uu-u_h)}{L^2(\Oms)}=\inf_{v\in V_h}\nm{\na(\uu-v)}{L^2(\Oms)}.
\end{equation}
The error estimate now boils down to the interpolate error estimate, which will be the
focus of the later sections.

The MsFEM problem~\eqref{eq:MsFE} has $\mc{O}(h^{-2})$ freedoms in two dimension. To calculate each multiscale basis, we need $\mc{O}(\wt{h}^{-2})$ freedoms with $\wt{h}$ the mesh size of the local cell problem. The number of the cell problem is $\mc{O}(h^{-1})$. The overall complexity of the proposed method is of $\mc{O}(h^{-2}+h^{-1}\wt{h}^{-2})$. Note that $\wt{h}\simeq M^{-1}h$, the complexity is of $\mc{O}(M^2h^{-3})$. The cell problems are independent of each other, and could be solved in parallel.
%
\begin{remark}
The proposed method can be generalized
to the problem with oscillatory inhomogeneous Dirichlet boundary conditions on the
rough surface. We assume $u^\eps=g_\eps$ on $\Gamma_\eps$. The MsFEM basis functions $\Phi_{p,\tau}^{\mr MS}$ satisfy
\begin{equation}\label{eq:cellpbDirich}
a_{\tau}(\Phi_{p,\tau}^{\mr MS},v)=0\quad \hbox{for all } v\in H^{1}_0(\tau),
\end{equation}
and is supplemented with the boundary condition: $\Phi^{\mr MS}_{p,\tau}=\theta_{p,\tau_\eps}(x)$ on the rough edge $\partial\tau\cap\Gamma_\eps$ and $\Phi^{\mr MS}_{p,\tau}=\phi_{p,\tau}$ on $\pa\tau\backslash\Ga_\eps$, where
\begin{equation}\label{eq:cellbcdDirich}
\theta_{p,\tau_\eps}(x):=\left\{
\begin{aligned}
\dfrac{\phi_{p,\tau_0}}{r(x)}
&\qquad\text{if\;}\|g_\eps-\aver{g_\eps}\|_{L^\infty(\pa\tau_\eps)}\le C\eps,\\
\dfrac{\phi_{p,\tau_0}}{r(x)}
\dfrac{g_\eps(x)}{\aver{g_\eps}(x)}&\qquad\text{otherwise,}
\end{aligned}\right.
\end{equation}
The detailed analysis of the MsFEM for inhomogeneous Dirichlet boundary value
problem will be addressed in a future paper.
\end{remark}
\section{Error Estimate for the Homogenization Problem}
In this section, we revisit some homogenization results for Problem~\eqref{eq:modelproblem}, which have been established in~\cite{Friedman:1997}, while we clarify the dependence of the estimates on the domain size, which is crucial for studying the accuracy of the proposed method. Our approach is different from that in~\cite{Friedman:1997} for estimate of
the first order approximation.

We assume that $\Oms$ is given by
\begin{equation}\label{eq:square}
\Oms{:}=\set{x\in\R^2}{0<x_1<1,\eps \gamma(x_1/\eps)<x_2<1},
\end{equation}
and the oscillating bottom boundary $\Ga_\eps$ is given by
\[
\Ga_\eps=\set{x\in\ov{\Om}_\eps}{0<x_1<1, x_2=\eps\gamma(x_1/\eps)}
\]
with $\gamma$ a positive smooth $1-$periodic function. We assume that $g_\eps(x_1)=g({x_1}/{\eps})$ with $g$
a smooth $1-$periodic function and satisfying
\begin{equation}\label{e:assump}
\nm{g-\hg}{L^{\infty}(\Sigma)}\leq C(\abs{\hg}+\eps),
\end{equation}
where $\Sigma=\set{\xi\in\R^2}{0<\xi_1<1,\xi_2=\gamma(\xi_1)}$ with $\hg$ the mean of $g$ over $\Sigma$:
\[
\hg =\dfrac{1}{r}\int_0^1 g(t)[1+(\gamma'(t))^2]^{1/2}\md\,t \quad\hbox{and} \quad r=\int_0^1 [1+ (\gamma'(t))^2]^{1/2}\md\,t.
\]
\begin{figure}[htbp]  
  \resizebox{!}{6cm}
  {\includegraphics[height=6.0cm]{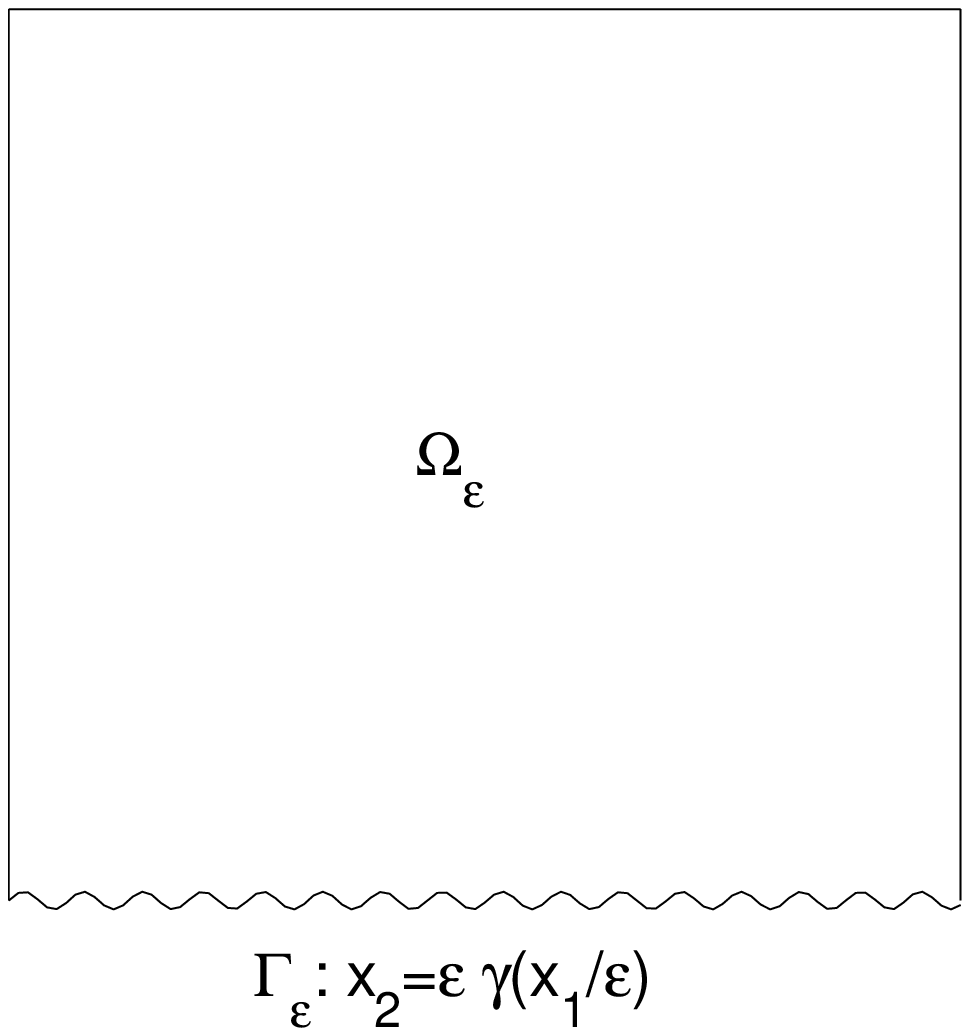}
  \hspace{-1cm}
  \includegraphics[height=6cm]{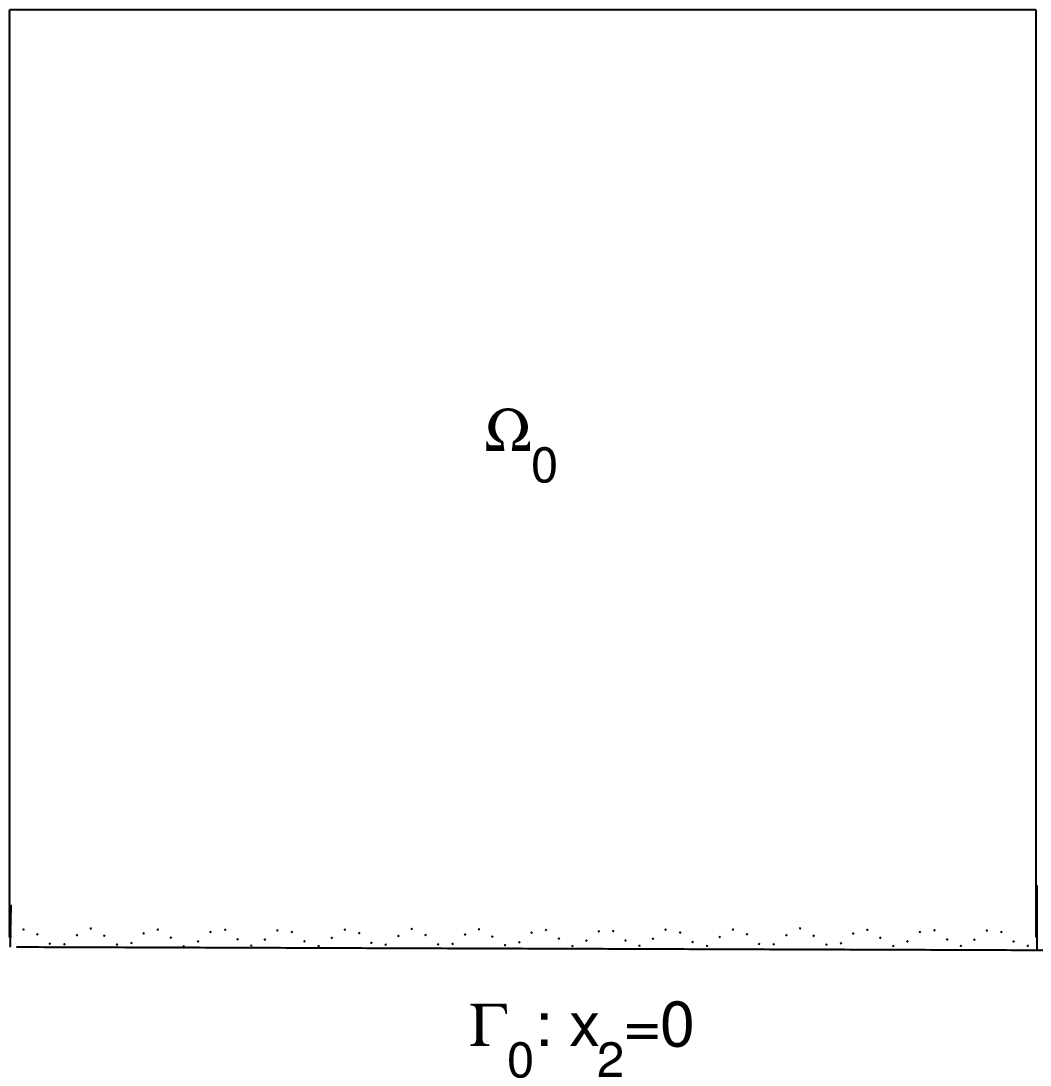}}
  \vspace{-1.0cm}
\caption{The domains $\Omega_{\eps}$ and $\Omega_0$.}
\label{fig:domain4}
\end{figure}
\begin{remark}\label{rem:omega}
The assumption $\gamma\geq 0$ ensures $\Oms\subset\Om_0$. This may make the presentation slightly
simpler.
All the results in this section remain valid for general $\ga$, we refer to~\cite[\S 8]{Friedman:1997}
and~\cite{NeussNeussRaduMikelic:2006} for related discussions.
\end{remark}
\subsection{The zeroth order approximation}
Let
\[
\Om_0=\set{x\in\R^2}{0<x_1<1, 0<x_2<1}\quad\text{and\quad}\Ga_0=\pa\Om_0\cap\set{x\in\R^2}{x_2=0}.
\]
Define \(\Ga_D{:}=\pa\Om_0\setminus\Ga_{0}\) and
\[
V(\Om_0)=\set{v\in H^1(\Om_0)}{v|_{\Ga_D}=0}.
\]
The zeroth order approximations $\hu$ of $\uu$  is such that $\hu\in V(\Om_0)$ and
\begin{equation}\label{eq:dirichlet0}
a_{\Om_0}(\hu,v)=(f,v)_{\Om_0}+(r\hg,v)_{\Ga_0}\qquad \hbox{for all } v\in V(\Om_0).
\end{equation}

We start with an extension result that will be frequently used later on, which
slightly refines that in~\cite[Appendix A]{Mclean:2000}.
\begin{lemma}\label{lema:extension}
There exists an extension operator $E: H^1(\Oms)\to H^1(\Om_0)$ such that, for any $\phi\in H^1(\Oms)$,
\begin{equation}\label{eq:extension}
\nm{E\phi}{H^1(\Om_0)}\le\sqrt{2+2A^2+2A\sqrt{1+A^2}}\nm{\phi}{H^1(\Oms)},
\end{equation}
where $A=\nm{\gamma^{\prime}}{L^\infty(0,1)}$.
\end{lemma}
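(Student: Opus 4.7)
The plan is to construct $E\phi$ by reflection across the rough graph $\Gamma_\eps$. Define $\sigma\colon\Om_0\setminus\ov{\Om}_\eps\to\Om_\eps$ by
$$
\sigma(x_1,x_2)=(x_1,\,2\eps\ga(x_1/\eps)-x_2),
$$
and set $E\phi=\phi$ on $\Om_\eps$, $E\phi=\phi\circ\sigma$ on the strip $\Om_0\setminus\ov{\Om}_\eps$. Since $\sigma$ is the identity on $\Gamma_\eps$, the two pieces share a common trace and the glued function lies in $H^1(\Om_0)$ by a standard trace-matching argument. Provided $\eps$ is small enough that $2\eps\max\ga\le 1$, the image of $\sigma$ is contained in $\Om_\eps$, so $E\phi$ is well-defined.

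Next I would compute the Jacobian
$$
D\sigma=\begin{pmatrix} 1 & 0\\ 2\ga'(x_1/\eps) & -1\end{pmatrix},\qquad |\det D\sigma|=1,
$$
and use the chain rule $\na E\phi(x)=(\na\phi)(\sigma(x))\,D\sigma(x)$ to get the pointwise bound $|\na E\phi(x)|^2\le\lambda_{\max}(x)\,|\na\phi(\sigma(x))|^2$, where $\lambda_{\max}(x)$ is the largest eigenvalue of $D\sigma(x)D\sigma(x)^T$. A direct computation shows the eigenvalues solve $\lambda^2-(2+4(\ga')^2)\lambda+1=0$, hence
$$
\lambda_{\max}(x)=1+2(\ga'(x_1/\eps))^2+2|\ga'(x_1/\eps)|\sqrt{1+(\ga'(x_1/\eps))^2}\le 1+2A^2+2A\sqrt{1+A^2}.
$$

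With the pointwise bound in hand, the change of variables $y=\sigma(x)$ (with unit Jacobian) yields
$$
\nm{\na E\phi}{L^2(\Om_0\setminus\Om_\eps)}^{2}\le(1+2A^2+2A\sqrt{1+A^2})\,\nm{\na\phi}{L^2(\Om_\eps)}^{2},
$$
and similarly $\nm{E\phi}{L^2(\Om_0\setminus\Om_\eps)}^2\le\nm{\phi}{L^2(\Om_\eps)}^2$. Adding the contributions from $\Om_\eps$ gives
$$
\nm{E\phi}{H^1(\Om_0)}^{2}\le 2\nm{\phi}{L^2(\Om_\eps)}^{2}+(2+2A^2+2A\sqrt{1+A^2})\nm{\na\phi}{L^2(\Om_\eps)}^{2},
$$
and since $2\le 2+2A^2+2A\sqrt{1+A^2}$ for $A\ge 0$, the announced bound follows after taking square roots.

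There is no real obstacle in this proof; the only item requiring care is the eigenvalue calculation that produces the sharp constant $1+2A^2+2A\sqrt{1+A^2}$, together with checking that the $L^2$-piece (with its constant $1$) is absorbed into the same factor so that the final estimate takes the compact form stated in the lemma. If one is willing to lose sharpness, one could instead bound $|\na E\phi|^2\le(1+4A^2)|\na\phi|^2\circ\sigma+\text{cross terms}$ by Cauchy--Schwarz, but the eigenvalue argument is the cleanest route to the precise constant used in the error analysis of later sections.
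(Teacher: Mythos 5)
Your proof is correct and takes essentially the same route as the paper: extension by reflection $x_2\mapsto 2\eps\gamma(x_1/\eps)-x_2$ across $\Gamma_\eps$, the observation that the substitution has unit Jacobian, and a change of variables back to $\Oms$ (you are in fact slightly more careful, noting the trace-matching and the condition guaranteeing $\sigma$ maps into $\Oms$). The only difference is in how the constant is computed: you get the sharp pointwise factor $1+2A^2+2A\sqrt{1+A^2}$ as the largest eigenvalue of $D\sigma\,D\sigma^{T}$, while the paper reaches the identical constant by estimating $\pa_{x_1}E\phi$ and $\pa_{x_2}E\phi$ separately with a Young-inequality parameter $t$ and minimizing $\max\{2+t,\;2+4A^2(1+1/t)\}$ at $t=2A^2+2A\sqrt{1+A^2}$ --- an equivalent computation in disguise.
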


\begin{proof}
For any $\phi\in H^1(\Oms)$, we define $E\phi$ by reflection with respect to $\Ga_\eps$.
\[
E\phi(x)=\left\{
\begin{aligned}
\phi(x)&\qquad x\in\Oms,\\
\phi(x_1,2\eps \gamma(x_1/\eps)-x_2)&\qquad x\in\Om_0\backslash\Oms.
\end{aligned}\right.
\]
A direct calculation gives
\[
\pa_{x_1}E\phi(\mr x)=\left\{
\begin{aligned}
\pa_{x_1}\phi(\mr x)&\qquad x\in\Oms,\\
\pa_{x_1}\phi+2\pa_{x_2}\phi \gamma^{\prime}(x_1/\eps)&\qquad x\in\Om_0\backslash\Oms.
\end{aligned}\right.
\]
and
\[
\pa_{x_2}E\phi(x)=\left\{
\begin{aligned}
\pa_{x_2}\phi(x)&\qquad x\in\Oms,\\
-\pa_{x_2}\phi(x_1,2\eps \gamma(x_1/\eps)-x_2)&\qquad x\in\Om_0\backslash\Oms.
\end{aligned}\right.
\]
Note the module of the Jacobian of the substitution
\[
(x_1,x_2)\mapsto(x_1,2\eps \gamma(x_1/\eps)-x_2)
\]
is equal to one. Observe that the inequality
\[
\nm{E\phi}{L^2(\Om_0)}^2\le 2\nm{\phi}{L^2(\Oms)}^2
\]
holds, where we have used
\begin{align*}
\nm{E\phi}{L^2(\Om_0\backslash\Oms)}^2&=\int_0^1\int_0^{\eps \gamma(x_1/\eps)}\abs{\phi(x_1,2\eps \gamma(x_1/\eps)-x_2)}^2\dx\\
&=\int_0^1\int_{\eps \gamma(x_1/\eps)}^{2\eps \gamma(x_1/\eps)}\abs{\phi(x_1,x_2)}^2\dx\\
&\le\nm{\phi}{L^2(\Oms)}^2.
\end{align*}
Next, we estimate $\pa_{x_1}E\phi$ and $\pa_{x_2}E\phi$.
\begin{align*}
\nm{\pa_{x_1}E\phi}{L^2(\Om_0)}^2&\le\nm{\pa_{x_1}\phi}{L^2(\Oms)}^2+\nm{\pa_{x_1}\phi
+2\ga^{\prime}\pa_{x_2}\phi}{L^2(\Oms)}^2\\
&\le (2+t)\nm{\pa_{x_1}\phi}{L^2(\Oms)}^2+4A^2(1+1/t)\nm{\pa_{x_2}\phi}
{L^2(\Oms)}^2
\end{align*}
with any $t>0$, and
\[
\nm{\pa_{x_2}E\phi}{L^2(\Om_0)}^2\le 2\nm{\pa_{x_2}\phi}{L^2(\Oms)}^2.
\]
The above three estimates imply
\begin{align*}
\nm{E\phi}{H^1(\Om_0)}^2&\le (2+t)\nm{\pa_{x_1}\phi}{L^2(\Oms)}^2+\Lr{2+
4A^2(1+1/t)}\nm{\pa_{x_2}\phi}{L^2(\Oms)}^2+2\nm{\phi}{L^2(\Oms)}^2.
\end{align*}
Hence,
\[
\nm{E\phi}{H^1(\Om_0)}^2 \le \max\{2+t,2+
4A^2(1+1/t)\}\nm{\phi}{H^1(\Oms)}^2.
\]
Optimizing the maximum with respect to parameter $t$, we obtain the maximum
is minimal if \(t=2A^2+2\sqrt{A^2+A^4}\). This completes the proof.
\end{proof}

To estimate the error between $\hu$ and $\uu$, we need an auxiliary result~\cite[Lemma 1.5, p.7]{OleinikShamayevYosifian:1992}. The present form can be found in~\cite[inequality (17) in Lemma 10]{NicaiseSauter:2006}.
\begin{lemma}\label{lema:strip}
Let $\Om$ be a Lipschitz domain, and $S_\eps{:}=\set{x\in\Om}{\text{dist}(x,\pa\Om)\le\eps}$, then for
any $1/2<\kappa\le\mu\le 1$ and $v\in H^\mu(\Om)$, there holds
\begin{equation}\label{eq:strip}
\nm{v}{L^2(S_\eps)}\le C\Lr{\sqrt\eps\nm{v}{H^{\kappa}(\Om)}+\eps^\mu\nm{v}{H^\mu(\Om)}},
\end{equation}
where $C>0$ is a constant independent of $\eps$.
\end{lemma}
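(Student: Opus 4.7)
The plan is to reduce the estimate to a flat half-space by a partition of unity, and there to split $v$ into the harmonic extension of its boundary trace plus a remainder with vanishing trace, each term being controlled by one of the two summands in~\eqref{eq:strip}.

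Since $\Om$ is Lipschitz, I would cover a neighbourhood of $\pa\Om$ by finitely many bi-Lipschitz charts flattening the boundary, together with a subordinate partition of unity. The part of $v$ supported away from $\pa\Om$ does not meet $S_\eps$ once $\eps$ is smaller than the distance between that support and $\pa\Om$, while the remaining pieces reduce the problem to the model case in which $\Om=\set{(y,t)}{t>0}$, $\pa\Om=\set{(y,0)}{y\in\R^{N-1}}$, and $S_\eps=\set{(y,t)}{0<t<\eps}$. The bi-Lipschitz flattening preserves the $H^\kappa$ and $H^\mu$ norms with constants depending only on $\Om$.

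In the model setting, set $\tau{:}=v|_{\pa\Om}$; since $\kappa>1/2$, the trace theorem yields $\nm{\tau}{L^2(\pa\Om)}\le C\nm{v}{H^\kappa(\Om)}$. Let $w$ be the Poisson extension of $\tau$ and $u{:}=v-w$, so that $u\in H^\mu_0(\Om)$. The tangential Poisson kernel is an $L^1$-normalised approximate identity, so $\nm{w(\cdot,t)}{L^2(\R^{N-1})}\le\nm{\tau}{L^2(\R^{N-1})}$ uniformly in $t>0$; integrating in $t$ over $(0,\eps)$ gives
\[
\nm{w}{L^2(S_\eps)}\le\sqrt{\eps}\,\nm{\tau}{L^2(\pa\Om)}\le C\sqrt{\eps}\,\nm{v}{H^\kappa(\Om)}.
\]
For $u$ I would invoke the fractional Hardy inequality: for $\mu\in(1/2,1]$ and $u\in H^\mu_0(\Om)$ there holds $\nm{u/d^\mu}{L^2(\Om)}\le C\nm{u}{H^\mu(\Om)}$ with $d(x){:}=\mr{dist}(x,\pa\Om)$. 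Consequently
\[
\nm{u}{L^2(S_\eps)}\le\eps^\mu\nm{u/d^\mu}{L^2(S_\eps)}\le C\eps^\mu\nm{u}{H^\mu(\Om)}\le C\eps^\mu\nm{v}{H^\mu(\Om)},
\]
where the last step uses the continuity of the harmonic extension $H^{\mu-1/2}(\pa\Om)\to H^\mu(\Om)$ to bound $\nm{w}{H^\mu(\Om)}$ and then the triangle inequality. Summing over the partition of unity yields~\eqref{eq:strip}.

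The main obstacle is the fractional Hardy inequality for $\mu\in(1/2,1)$: although classical---provable by a Gagliardo seminorm argument or by real interpolation between the $\mu=1$ and $\mu=0$ endpoints---some care is needed to ensure the constant depends only on the Lipschitz character of $\Om$ and not on $\eps$. The hypothesis $\kappa>1/2$ is exactly what puts $\tau$ in $L^2(\pa\Om)$ and rules out the forbidden endpoint $\mu=1/2$. The remaining technical step, that the bi-Lipschitz flattening preserves fractional Sobolev norms with uniform constants, is standard for Lipschitz boundaries.
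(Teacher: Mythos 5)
You should first note that the paper contains no proof of this lemma at all: it is quoted verbatim from the literature, namely \cite[Lemma 1.5, p.~7]{OleinikShamayevYosifian:1992} and \cite[inequality (17) in Lemma 10]{NicaiseSauter:2006}, so there is no in-paper argument to match. The cited proofs are more elementary than yours: for $\mu=1$ one flattens the boundary locally and integrates along the transversal direction (Newton--Leibniz plus averaging) to get $\nm{v}{L^2(S_\eps)}\le C\sqrt{\eps}\,\nm{v}{H^1(\Om)}$, and the fractional two-term form is then obtained by interpolation-type arguments. Your route --- localize and flatten, split $v=w+u$ with $w$ a lifting of the trace $\tau$ and $u$ of zero trace, bound $w$ on the strip by $\sqrt{\eps}\,\nm{\tau}{L^2}$ via the $L^2$-contraction property of the Poisson semigroup, and bound $u$ by $\eps^\mu$ via the fractional Hardy inequality --- is genuinely different and has the merit of producing both terms of \eqref{eq:strip} in one pass, with a transparent mechanism for each: the $\sqrt{\eps}$ term measures the boundary-layer mass of the trace, the $\eps^\mu$ term the vanishing rate $u=O(d^\mu)$ of the zero-trace remainder. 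The hypotheses are used exactly where you say: $\kappa>1/2$ gives $\tau\in L^2(\pa\Om)$, and $\mu>1/2$ avoids the forbidden Hardy exponent (for $1/2<\mu\le 1$ the inequality $\nm{u/d^\mu}{L^2(\Om)}\le C\nm{u}{H^\mu(\Om)}$ for zero-trace $u$ is indeed classical, e.g.\ Grisvard, Theorem~1.4.4.4, with constant depending only on the Lipschitz character).

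One intermediate claim is false as literally stated and needs a (routine) repair: the Poisson extension is \emph{not} bounded from $H^{\mu-1/2}$ of the boundary into $H^\mu$ of the whole half-space. Since $\wh{w}(\xi,t)=e^{-\abs{\xi}t}\wh{\tau}(\xi)$, one has $\int_0^\infty\nm{w(\cdot,t)}{L^2}^2\md t=\int\abs{\wh{\tau}(\xi)}^2(2\abs{\xi})^{-1}\md \xi$, which diverges at low frequencies, and localizing $\tau$ does not remove this. The standard fix is to replace $w$ by $\chi(t)\,w$ with a cutoff $\chi\equiv 1$ near $t=0$ (the usual trace-lifting operator): this leaves the $L^2(S_\eps)$ estimate untouched for small $\eps$, keeps $u=v-\chi w$ of zero trace, and restores $\nm{\chi w}{H^\mu}\le C\nm{\tau}{H^{\mu-1/2}}\le C\nm{v}{H^\mu}$, which is what your final triangle-inequality step requires. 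With that correction, and the standard facts you invoke (bi-Lipschitz invariance of $H^s$ for $0\le s\le 1$, comparability of $\mr{dist}(\cdot,\pa\Om)$ with the flattened vertical coordinate, triviality of the interior piece and of the regime where $\eps$ is bounded below), your proof is complete and the constant depends only on $\Om$, as required.
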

\begin{lemma}\label{prop:zeroth}
Let $\uu$ and $\hu$ be the solutions to Problems~\eqref{eq:modelproblem} and~\eqref{eq:dirichlet0}, respectively. Then
\begin{equation}\label{eq:zeroth}
\nm{\na(\uu-\hu)}{L^2(\Oms)}\le  C\sqrt\eps(\nm{
f}{L^2(\Om_0)} + \nm{\na\hu}{H^1(\Om_0)}+
\|g_\eps \|_{L^2 (\Gamma_{\eps})}).
\end{equation}
\end{lemma}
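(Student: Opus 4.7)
The strategy is to derive an explicit error equation for $w=\uu-\hu$ on $\Oms$ and then extract a factor $\sqrt\eps$ from the periodic mean-zero structure of the rough-boundary flux residual. Since $\hu\in H^{2}(\Om_{0})$ satisfies $-\Delta\hu=f$ in $\Om_{0}\supset\Oms$, an integration by parts on $\Oms$ gives, for every $v\in V(\Oms)$, $a_{\Oms}(\hu,v)=(f,v)_{\Oms}+(\pa_{n_\eps}\hu,v)_{\Ga_\eps}$, where $n_\eps$ is the outer unit normal of $\Oms$ along $\Ga_\eps$. Subtracting from the weak form~\eqref{eq:weakform} of $\uu$ and testing with $v=w$ yields the key identity
\[
\nm{\na w}{L^{2}(\Oms)}^{2}=(g_\eps-\pa_{n_\eps}\hu,w)_{\Ga_\eps},
\]
and the remaining task is to bound the right-hand side by $C\sqrt\eps\,(\nm{f}{L^{2}(\Om_0)}+\nm{\na\hu}{H^{1}(\Om_0)}+\nm{g_\eps}{L^{2}(\Ga_\eps)})\,\nm{\na w}{L^{2}(\Oms)}$.

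Parametrising $\Ga_\eps$ by $x_1\in(0,1)$ with fast variable $\xi=x_1/\eps$ and using the homogenised boundary condition $\pa_{x_2}\hu(\cdot,0)=-r\hg$, a Taylor expansion of $\hu$ from $(x_1,\eps\ga(\xi))$ back to $(x_1,0)$ decomposes the integrand $(g_\eps-\pa_{n_\eps}\hu)\md s$ on $\Ga_\eps$ into a principal oscillatory piece $F(x_1,\xi)=g(\xi)\sqrt{1+\gamma'(\xi)^{2}}-\gamma'(\xi)\pa_{x_1}\hu(x_1,0)-r\hg$, which has zero mean in $\xi$ for every fixed $x_1$, plus a Taylor remainder of order $\eps$ involving $\na^{2}\hu$ sampled in the strip $\Om_{0}\setminus\Oms$ of width $O(\eps)$. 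For this remainder, Cauchy-Schwarz combined with the extra $\sqrt\eps$ gain from the Taylor integration and the trace-Poincar\'e inequality for $w$ (which vanishes on $\Ga_D$) yields a contribution of order $\sqrt\eps\,\nm{\na\hu}{H^{1}(\Om_0)}\nm{\na w}{L^{2}(\Oms)}$; Lemma~\ref{lema:strip} provides the sharp control of $\na\hu$ on the $\eps$-strip, while Lemma~\ref{lema:extension} is used to lift $w$ to $\Om_0$ when needed.

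For the oscillatory piece $F$, let $M(x_1,\xi)=\int_{0}^{\xi}F(x_1,t)\md t$, which is $1$-periodic in $\xi$ by the mean-zero property. I would then introduce the stream function $\psi(x)=-\eps M(x_1,x_1/\eps)\chi(x_2)$ with $\chi\in C^{\infty}_{c}(\R)$ equal to $1$ near $x_2=0$ and supported in $[0,c\eps]$, and set $\mathbf V=\na^{\perp}\psi$. By a direct computation, $\mathbf V$ is divergence-free, $\mathbf V\cdot n=0$ on $\Ga_D$, and $\mathbf V\cdot n_\eps\,\md s$ on $\Ga_\eps$ reproduces $F(x_1,x_1/\eps)\md x_1$ up to an explicit $\eps\,\pa_{x_1}M$ correction; moreover $\mathbf V$ is concentrated in an $O(\eps)$-strip above $\Ga_\eps$, so that $\nm{\mathbf V}{L^{2}(\Oms)}=O(\sqrt\eps)$. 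The divergence theorem then converts $\int_{\Ga_\eps}F\,w\,\md x_1$ into $\int_{\Oms}\mathbf V\cdot\na w\dx$ plus the correction, and Cauchy-Schwarz delivers the desired $\sqrt\eps\,\nm{\na w}{L^{2}(\Oms)}$ bound.

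The principal technical obstacle is the sharp construction of the divergence-free lift, which must simultaneously match the oscillatory trace on $\Ga_\eps$, concentrate its $L^{2}$-mass in an $O(\eps)$-wide strip, and have its lower-order $\pa_{x_1}M$ corrections controllable purely in $L^{2}$-based norms. The restricted regularity $\hu\in H^{2}(\Om_0)$ forbids stronger traces like $\pa_{x_1}^{2}\hu\vert_{x_2=0}$, which is why the statement is phrased in terms of $\nm{\na\hu}{H^{1}(\Om_0)}$ and the data norms $\nm{f}{L^{2}(\Om_0)}$, $\nm{g_\eps}{L^{2}(\Ga_\eps)}$, with the latter entering through Lax-Milgram-type bounds on $\uu$ that appear when handling the error terms.
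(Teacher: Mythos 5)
Your starting identity is sound---it is precisely \eqref{eq:step1} in the paper's proof of Theorem~\ref{theo:esthighorder}---and your Taylor remainder is harmless: it pairs the trace of $w$ on $\Ga_\eps$ with quantities of the form $\int_0^{\eps\gamma}\pa_{x_2}\na\hu\dx_2$, and Cauchy--Schwarz plus the trace and Poincar\'e inequalities give a contribution $C\sqrt\eps\nm{\na\hu}{H^1(\Om_0)}\nm{\na w}{L^2(\Oms)}$. The genuine gap is in your treatment of the oscillatory piece $F$. Since $F(x_1,\xi)=g(\xi)\sqrt{1+\gamma'(\xi)^2}-\gamma'(\xi)\pa_{x_1}\hu(x_1,0)-r\hg$ contains the slow factor $\pa_{x_1}\hu(x_1,0)$, every $x_1$-derivative of your primitive $M(x_1,\xi)=\int_0^\xi F(x_1,t)\md t$ produces the trace of a \emph{second} derivative: explicitly $\pa_{x_1}M=-\bigl(\gamma(\xi)-\gamma(0)\bigr)\pa^2_{x_1}\hu(x_1,0)$. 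Under the regularity implicit in the statement ($\hu\in H^2(\Om_0)$, so $\na\hu$ has only an $H^{1/2}$ trace on $\Ga_0$), this object is not in $L^2(\Ga_0)$ and is not controlled by $\nm{\na\hu}{H^1(\Om_0)}$. Consequently $\psi=-\eps M(x_1,x_1/\eps)\chi(x_2)$ need not lie in $H^1$, $\mathbf{V}=\na^{\perp}\psi$ need not be an $L^2$ vector field, and both the claimed bound $\nm{\mathbf V}{L^2(\Oms)}=O(\sqrt\eps)$ and the boundary correction $\eps\int_0^1\pa_{x_1}M\,w\dx_1$ are unbounded (indeed ill-defined) in the allowed norms. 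The prefactor $\eps$ does not rescue you: integrating by parts in $x_1$ to unload $\pa^2_{x_1}\hu$ returns an $O(1)$ term carrying $\gamma'(x_1/\eps)$, which is where you started. You flag this obstacle yourself (the corrections must be ``controllable purely in $L^2$-based norms'') but never resolve it, and resolving it is exactly the content of the lemma.

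The paper's proof avoids the difficulty altogether by never taking the trace of $\pa_n\hu$ on $\Ga_\eps$: it subtracts the two weak formulations (on $\Oms$ and on $\Om_0$, lifting the test function by the reflection operator of Lemma~\ref{lema:extension}), so the error functional splits into a bulk strip term $\int_{\Om_0\setminus\Oms}(f\phi-\na\hu\cdot\na\phi)\dx$, bounded by $C\sqrt\eps\Lr{\nm{f}{L^2(\Om_0)}+\nm{\na\hu}{H^1(\Om_0)}}\nm{\na\phi}{L^2(\Om_0)}$ via Lemma~\ref{lema:strip}, and a pure data mismatch between $g_\eps$ on $\Ga_\eps$ and $r\hg$ on $\Ga_0$ in which $\na\hu$ does not appear at all; the mean-zero oscillation $\int_{\Ga_0}(p^\eps-\aver{p})\phi\dx_1$ is then estimated period by period with Poincar\'e in $H^{1/2}$, needing only $\nm{g_\eps}{L^2(\Ga_\eps)}$. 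If you wish to salvage your route, apply that same period-wise device to $F$: freeze $\pa_{x_1}\hu(\cdot,0)$ at its mean on each $\eps$-period before antidifferentiating in $\xi$ (the commutator being estimable with $H^{1/2}$ regularity only), at which point your argument essentially collapses into the paper's.
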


\begin{proof}
Denote $e =\uu -\hu \in V(\Omega_\eps)$. For any $\phi\in V(\Omega_\eps)$, we have
\begin{align}
&\int_{\Oms}\nabla e\nabla \phi\dx=-\int_{\Om_0\setminus\Oms}(f\phi-\na\hu\cdot\na\phi)\dx
+\int_{\Ga_\eps} g_\eps \phi\md\sigma(x)-\int_{\Ga_0} r \hg \phi \dx_1 \nn\\
=&-\int_{\Om_0\setminus\Oms} (f\phi-\na\hu\cdot\na\phi) \dx\nn\\
&+\int_0^1 \Lr{g(x_1/\eps)\phi(x_1,\eps \gamma(x_1/\eps))[1+( \gamma^{\prime}(x_1/\eps))^2]^{1/2}-r\hg \phi (x_1,0) }\dx_1\nn \\
=&I_1+I_2.\nn
\end{align}
Using~\eqref{eq:strip}, we bound $I_1$ as
\begin{align}\label{e:estI1}
\abs{I_1}&\le\nm{f}{L^2(\Om_0\setminus\Oms)}\nm{\phi}{L^2(\Om_0\setminus\Oms)}
+\nm{\na\hu}{L^2(\Om_0\setminus\Oms)}\nm{\na\phi}{L^2(\Om_0\setminus\Oms)}\nn\\
&\le C\sqrt\eps\nm{f}{L^2(\Om_0)}\nm{\phi}{H^1(\Om_0)}
+C\sqrt\eps\nm{\na\hu}{H^1(\Om_0)}\nm{\na\phi}{L^2(\Om_0)}\nn\\
&\le C\sqrt\eps\Lr{\nm{f}{L^2(\Om_0)}+\nm{\na\hu}{H^1(\Om_0)}}\nm{\na\phi}{L^2(\Om_0)},
\end{align}
where in the last step we have used {\em Poincar\'e's inequality} for $\phi$.

Denote by $p^\eps(x_1)=p(x_1/\eps)$ with $p(t)=g(t)[1+\gamma^{\prime}(t)^2]^{1/2}$, we have
\[
I_2=\int_0^1\Lr{p^\eps(x_1)\phi(x_1,\eps \gamma(x_1/\eps))-\aver{p}\phi(x_1,0)}\dx_1,
\]
where $\aver{p}$ denotes the average of $p$ and $\aver{p}=r\aver{g}$, i.e.,
\(
\aver{p}=\int_0^1g(t)[1+\gamma^{\prime}(t)^2]^{\frac12}\md\,t.
\)

It is clear to see
\[
I_2=\int_0^1p^\eps(x_1)\Lr{\phi(x_1,\eps \gamma(x_1/\eps))-\phi(x_1,0)}\dx_1
+\int_0^1\Lr{p^\eps-\aver{p}}\phi(x_1,0)\dx_1.
\]
A direct calculation gives that
\begin{align*}
\abs{\int_0^1p^\eps(x_1)\Lr{\phi(x_1,\eps \gamma(x_1/\eps))-\phi(x_1,0)}\dx_1}
&=\abs{\int_0^1p^\eps(x_1)\int_0^{\eps \gamma(x_1/\eps)}\diff{\phi}{x_2}\dx_2\dx_1}\\
&\le\sqrt\eps\nm{\na\phi}{L^2(\Om_0\backslash\Oms)}\nm{p^\eps \gamma^{1/2}}{L^2(\Ga_0)}\\
&\le C\sqrt\eps\nm{\na\phi}{L^2(\Om_0)}\nm{g_{\eps}}{L^2(\Gamma_\eps)},
\end{align*}
where we have used
\[
\nm{p^\eps \gamma^{1/2}}{L^2(\Ga_0)}^2
\le A\nm{\gamma}{L^\infty(0,1)}\nm{g_{\eps}}{L^2(\Gamma_\eps)}^2.
\]

Denote by $s_0=0<s_1=\eps<\cdots<s_N=N\eps=1$, using the fact that
\[
\aver{p}=\aver{p^\eps}_i=\negint_{s_i}^{s_{i+1}}p^\eps(x_1)\dx_1,
\]
we decompose the second term into
\begin{align*}
\int_{\Ga_0}\Lr{p^\eps(x_1)-\aver{p}}\phi(x_1,0)\dx_1
&=\sum_{i=0}^{N-1}\int_{s_i}^{s_{i+1}}p^\eps(x_1)\Lr{\phi(x_1,0)-\aver{\phi}_i}\dx_1,
\end{align*}
where
\(
\aver{\phi}_i=\negint_{s_i}^{s_{i+1}}\phi(x_1,0)\dx_1.
\)
Using {\em Poincar\'e's inequality}, we obtain
\begin{align*}
\abs{\int_{\Ga_0}\!\!\Lr{p^\eps(x_1)\!-\!\aver{p}}\phi(x_1,0)\dx_1}&\le\sum_{i=0}^{N-1}
\nm{p^\eps}{L^2(s_i,s_{i+1})}\nm{\phi-\aver{\phi}_i}{L^2(s_i,s_{i+1})}\\
&\le C\sqrt\eps\sum_{i=0}^{N-1}\nm{p^\eps}{L^2(s_i,s_{i+1})}\nm{\phi}{H^{1/2}(s_i,s_{i+1})}\\
&\le C\sqrt\eps\!\Lr{\sum_{i=0}^{N-1}\!\nm{p^\eps}{L^2(s_i,s_{i+1})}^2\!}^{\!\!\!1/2}
\!\!\!\Lr{\sum_{i=0}^{N-1}\!\nm{\phi}{H^{1/2}(s_i,s_{i+1})}^2\!}^{\!\!\!1/2}\\
&\le C\sqrt\eps\nm{g_\eps}{L^2(\Gamma_{\eps})}\nm{\phi}{H^{1/2}(\Ga_0)}\\
&\le C\sqrt\eps\nm{g_\eps}{L^2(\Gamma_{\eps})}\nm{\phi}{H^1(\Om_0)},
\end{align*}
where we have used the fact that
\(
\sum_{i=0}^{N-1}\nm{p^\eps}{L^2(s_i,s_{i+1})}^2\le(1+A) \nm{g_\eps}{L^2(\Gamma_{\eps})}^2.
\)
This implies
\begin{equation}\label{eq:estI2}
\abs{I_2}\le C\sqrt\eps\nm{\phi}{H^1(\Om_0)}
\nm{g_\eps}{L^2(\Gamma_{\eps})}.
\end{equation}

Using the extension result~\eqref{eq:extension}, we have
\[
\nm{u^\eps-u^0}{H^1(\Om_0)}\le C\nm{u^\eps-u^0}{H^1(\Oms)},
\]
where $C$ only depends on $\|\gamma^{\prime}\|_{L^\infty(0,1)}$. This inequality together with~\eqref{e:estI1} and~\eqref{eq:estI2}
implies~\eqref{eq:zeroth}.
\end{proof}

The above lemma shows that $\hu$ approximates to $\uu$ in $H^1$ seminorm
with rate $\mc{O}(\sqrt\eps)$.
The convergence rate is inadequate in many applications. We step to the
first order approximation in the next part.
\subsection{Some auxiliary problems}
To find the next order approximation of Problem~\eqref{eq:modelproblem}, we define a semi-infinite tube
as
\[
\tube{:}=\set{\xi \in\R^2}{0<\xi_1<1,\xi_2>\gamma(\xi_1)}
\]
with a curved boundary
\(
\Sigma{:}=\set{\xi \in\R^2}{0<\xi_1<1,\xi_2=\gamma(\xi_1)}.
\)

Three auxiliary problems are defined as follows. Let $\beta_0, \beta_1$ and $\beta_2$ be
three unknown functions posed on $\tube$, which are periodic
in $\xi_1$ with period $1$ and satisfy
\begin{equation}\label{eq:aux1}
\left\{\begin{aligned}
-\triangle_{\xi}\beta_0&=0,\qquad &&\hbox{in }\tube, \\
\diff{\beta_0}{n}&=  g(\xi_1)- \hg,\quad  &&\hbox{on $\Sigma$}, \\
\lim_{\xi_2\to\infty}\beta_0 &=0,\quad&&
\end{aligned}\right.
\end{equation}
and
\begin{equation}\label{eq:aux2}
\left\{\begin{aligned}
-\triangle_{\xi}\beta_1& =0, \qquad&&\hbox{in }\tube, \\
\diff{\beta_1}{n}&=-\dfrac{\gamma'(\xi_1)}{[1+(\gamma'(\xi_1))^2]^{1/2}},\quad
 &&\hbox{on $\Sigma$,} \\
\lim_{\xi_2\to\infty}\beta_1&=0,\quad&&
\end{aligned}\right.
\end{equation}
and
\begin{equation}\label{eq:aux3}
\left\{\begin{aligned}
-\triangle_{\xi}\beta_2&=0, \qquad&&\hbox{in }\tube, \\
\diff{\beta_2}{n}&=\dfrac{1}{[1+( \gamma^{\prime}({\xi_1}))^2]^{1/2}}-\dfrac{1}{r},\quad &&\hbox{on $\Sigma$,} \\
\lim_{\xi_2\to\infty}\beta_2 &=0.\quad&&
\end{aligned}\right.
\end{equation}
Here $\triangle_{\xi}{:}=\pa_{\xi_1}^2+\pa_{\xi_2}^2$.
It is well-known that each problem has a unique solution,
and the solutions have the following decay properties~\cite[Theorem 2.2]{Friedman:1997}.
Similar results for Dirichlet boundary value problems can also be found in~\cite{AllaireAmar:1999,Mikelic:2009}.
\begin{lemma}\label{lem:estaux}
Let $\beta_0, \beta_1$ and $\beta_2$ be the solutions of~\eqref{eq:aux1},\eqref{eq:aux2} and~\eqref{eq:aux3}, respectively. Then, for $i=0,1$ and $2$, there exist constants $C$ and $\delta$ such that
\begin{equation}\label{eq:estaux}
\nm{\beta_i}{L^\infty(\tube)}+\nm{\na_{\xi}\beta_i}{L^\infty(\tube)}\le Ce^{-\delta\xi_2}.
\end{equation}
\end{lemma}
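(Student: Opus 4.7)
The plan is to exploit the fact that, far from the curved boundary $\Sigma$, the tube $\tube$ coincides with a straight semi-infinite strip on which $\beta_i$ is harmonic and $1$-periodic in $\xi_1$. Fourier analysis in $\xi_1$ then gives exponential $L^2$ decay with explicit rate, which can be upgraded to the claimed pointwise $C^1$ estimate by elliptic regularity.

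First I would check that each Neumann datum on $\Sigma$ has zero mean with respect to the induced surface measure $\md\sigma=\sqrt{1+\gamma'^2}\md\xi_1$. For $\beta_0$,
\[
\int_\Sigma(g(\xi_1)-\hg)\md\sigma(\xi)=\int_0^1(g(t)-\hg)\sqrt{1+\gamma'(t)^2}\md t=0
\]
by the definitions of $\hg$ and $r$; for $\beta_1$ the integral reduces to $-\int_0^1\gamma'(t)\md t=0$ by $1$-periodicity of $\gamma$; and for $\beta_2$ a direct computation gives $1-r/r=0$. This compatibility is the usual solvability condition, and, combined with integration of $-\Delta\beta_i=0$ over a truncated strip and the periodicity in $\xi_1$, it also implies that the $\xi_1$-mean $m_i(\xi_2):=\int_0^1\beta_i(\xi_1,\xi_2)\md\xi_1$ is affine on $\xi_2>M:=\max_{[0,1]}\gamma$, hence identically zero thanks to $\beta_i\to 0$.

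Set $S:=(0,1)\times(M,\infty)$. On $S$ each $\beta_i$ is harmonic, $1$-periodic in $\xi_1$, has zero $\xi_1$-mean, and vanishes at infinity. Expanding
\[
\beta_i(\xi_1,\xi_2)=\sum_{k\ne 0}c_k^{(i)}(\xi_2)\,e^{2\pi\mathrm{i}k\xi_1},
\]
the coefficients satisfy $(c_k^{(i)})''=(2\pi k)^2 c_k^{(i)}$, and the decay condition selects $c_k^{(i)}(\xi_2)=c_k^{(i)}(M)e^{-2\pi|k|(\xi_2-M)}$; the zero mode is killed by the zero-mean property. Parseval's identity then yields
\[
\nm{\beta_i(\cdot,\xi_2)}{L^2(0,1)}\le e^{-2\pi(\xi_2-M)}\nm{\beta_i(\cdot,M)}{L^2(0,1)}\qquad\text{for }\xi_2\ge M,
\]
with the right-hand factor bounded a priori by the data through a standard energy estimate on a truncation of $\tube$.

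To promote this $L^2$ decay to the pointwise $C^1$ bound, I would apply interior elliptic regularity to the harmonic function $\beta_i$ on unit squares $Q\subset S$ centered at points $(\xi_1^*,\xi_2^*)$ with $\xi_2^*>M+1$, which gives
\[
|\beta_i(\xi_1^*,\xi_2^*)|+|\na_\xi\beta_i(\xi_1^*,\xi_2^*)|\le C\nm{\beta_i}{L^2(Q)}\le Ce^{-\delta\xi_2^*}
\]
for any $\delta<2\pi$. On the remaining bounded core $\gamma(\xi_1)\le\xi_2\le M+1$ the result is absorbed into the same exponential bound by enlarging $C$, provided one has a single uniform $C^1$ bound there. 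Securing that uniform bound is the main obstacle: interior regularity does not reach the curved Neumann boundary $\Sigma$, so one must invoke boundary Schauder or $W^{2,p}$ estimates up to $\Sigma$ with a constant independent of horizontal translation. The smoothness and $1$-periodicity of $\gamma$ and $g$, together with translation invariance in $\xi_1$, reduce this to a single boundary estimate on one canonical compact smooth piece $\tube\cap\{\xi_2<M+2\}$, which supplies the needed bound and completes the proof of~\eqref{eq:estaux}.
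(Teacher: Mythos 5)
Your proposal is correct, but it is worth noting that the paper does not actually prove this lemma at all: it quotes the decay estimate directly from \cite[Theorem 2.2]{Friedman:1997} (with the remark that analogous Dirichlet results appear in \cite{AllaireAmar:1999,Mikelic:2009}). So you have supplied a self-contained proof of a fact the authors only cite, and your argument follows the classical route used in that literature: checking the Neumann compatibility conditions, separating variables in the straight part of the tube, and upgrading $L^2$ cross-sectional decay to pointwise $C^1$ decay by elliptic regularity. Your three compatibility computations are all correct --- for $\beta_0$ the weighted mean of $g-\hg$ vanishes by the very definition of $\hg$ and $r$; for $\beta_1$ the surface element cancels the denominator and $\int_0^1\gamma'\md t=0$ by periodicity; for $\beta_2$ one gets $1-r/r=0$ --- and the mode analysis on the strip above $M=\max\gamma$ (zero mode killed by the mean-zero property plus decay, nonzero modes forced to be pure decaying exponentials $e^{-2\pi|k|(\xi_2-M)}$) is exactly right, with periodicity in $\xi_1$ letting you regard the strip as a cylinder so that interior estimates for harmonic functions apply on unit squares without lateral boundary issues.

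Two places rest on standard facts you should cite rather than wave at. First, the factor $\nm{\beta_i(\cdot,M)}{L^2(0,1)}$ must be finite: this follows from the existence theory for these boundary-layer problems, which produces solutions of finite Dirichlet energy (and the stated condition $\lim_{\xi_2\to\infty}\beta_i=0$ is what licenses discarding the growing modes $e^{+2\pi|k|\xi_2}$ and the affine zero mode); since the lemma presupposes well-posedness, this is legitimate, but it should be stated. Second, as you correctly identify, the uniform $C^1$ bound on the compact core $\gamma(\xi_1)\le\xi_2\le M+1$ needs boundary Schauder (or $W^{2,p}$ plus embedding) estimates up to the smooth curved Neumann boundary $\Sigma$; smoothness and $1$-periodicity of $\gamma$ and $g$ reduce this to one compact piece, so the constant is genuinely uniform. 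With those two references made explicit, your argument is complete and even yields the explicit rate $\delta=2\pi$ away from the boundary, which is sharper than the unquantified $\delta$ the paper imports from \cite{Friedman:1997}.
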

\subsection{The first order approximation}
Denote $\beta_i^\eps(x)=\beta_i(\xxe)$ for $i=0,1,2$, and define
\begin{equation}\label{eq:firstapp}
u^1(x)=\beta_0^\eps(x)+\beta_i^\eps\pa_{x_i}\hu(x).
\end{equation}
The first order approximation $u_1^\eps{:}=\hu+\eps u^1$, which
does not satisfy the homogeneous
Dirichlet boundary condition as $\uu$ on $\Ga_D$.
It is useful to introduce a corrector $\corr$ to the first order approximation, which satisfies $\corr-u^1\in V(\Om_0)$
and
\begin{equation}\label{eq:corrector}
a_{\Om_0}(\corr,v)=0\qquad \hbox{for all } v\in V(\Om_0).
\end{equation}

The corrector $\corr$ can be estimated as follows.
\begin{lemma}\label{lema:estcorr}
Let $\corr$ be the solution of~\eqref{eq:corrector},
then there exists $C$ that is
independent of the size of $\Omega_0$ such that
\begin{equation}\label{eq:estcorr}
\nm{\na\corr}{L^2(\Omega_0)}
 \le C\Lr{1+\nm{\na\hu}{L^\infty(\Om_0)}+\nm{\na^2\hu}{L^2(\Om_0)}}.
\end{equation}
\end{lemma}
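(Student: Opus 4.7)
The plan is to exploit the variational characterization of $\corr$: since $\corr-u^1\in V(\Om_0)$ and the test equation in~\eqref{eq:corrector} kills every increment in $V(\Om_0)$, the Pythagoras identity in the $a_{\Om_0}$ inner product shows that $\corr$ is the unique minimizer of $v\mapsto\nm{\na v}{L^2(\Om_0)}^2$ over the affine subspace $\{v\in H^1(\Om_0):\,v-u^1\in V(\Om_0)\}$. Consequently $\nm{\na\corr}{L^2(\Om_0)}\le\nm{\na\widetilde u}{L^2(\Om_0)}$ for every $\widetilde u\in H^1(\Om_0)$ satisfying $\widetilde u|_{\Ga_D}=u^1|_{\Ga_D}$. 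The naive lift $\widetilde u=u^1$ is useless because the chain rule produces factors of $\eps^{-1}$ in $\na\beta_j^\eps$, yielding only $\nm{\na u^1}{L^2}=\mc O(\eps^{-1/2})$. I would instead call on the inverse trace theorem for the Lipschitz domain $\Om_0$ with Dirichlet portion $\Ga_D$ to pick $\widetilde u$ realizing $\nm{\widetilde u}{H^1(\Om_0)}\le C\nm{u^1|_{\Ga_D}}{H^{1/2}(\Ga_D)}$, where $C$ depends only on the shape of $\Om_0$: the $L^2$ gradient norm and the Gagliardo $H^{1/2}$ seminorm are both invariant under the dilation $y=x/H$, so the size of $\Om_0$ can be scaled out.

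The problem is thereby reduced to bounding $\nm{u^1|_{\Ga_D}}{H^{1/2}(\Ga_D)}$. I would split $\Ga_D$ into the top edge $\{x_2=1\}$ and the two lateral edges $\{x_1=0\}$ and $\{x_1=1\}$. On the top edge, Lemma~\ref{lem:estaux} gives $|\beta_j(\cdot/\eps,1/\eps)|\le Ce^{-\delta/\eps}$, so this contribution is exponentially small in $\eps$ and negligible. On each lateral edge, say $\{x_1=0\}$, write $u^1(0,x_2)=h_0(x_2)+h_i(x_2)\,\pa_{x_i}\hu(0,x_2)$ with $h_j(x_2):=\beta_j(0,x_2/\eps)$, and evaluate the Gagliardo seminorm piece by piece. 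Exponential decay yields the key quantitative inputs $\nm{h_j}{L^2(0,1)}^2\le C\eps$ and $\nm{h_j'}{L^2(0,1)}^2\le C/\eps$, and the interpolation identity $[L^2,H^1]_{1/2}=H^{1/2}$ produces $\nm{h_j}{H^{1/2}(0,1)}\le C$ uniformly in $\eps$. The product $h_i\,\pa_{x_i}\hu(0,\cdot)$ is handled by the Leibniz-type splitting of the Gagliardo seminorm,
\[
[h_i\,\pa_{x_i}\hu(0,\cdot)]_{H^{1/2}(0,1)}\le \sqrt 2\,\nm{h_i}{L^\infty}\,[\pa_{x_i}\hu(0,\cdot)]_{H^{1/2}(0,1)}+\sqrt 2\,\nm{\pa_{x_i}\hu}{L^\infty}\,[h_i]_{H^{1/2}(0,1)},
\]
after which the interpolation bound for $[h_i]_{H^{1/2}}$ together with the standard trace inequality $\nm{\pa_{x_i}\hu|_{\{x_1=0\}}}{H^{1/2}(0,1)}\le C\nm{\pa_{x_i}\hu}{H^1(\Om_0)}\le C(\nm{\na\hu}{L^2(\Om_0)}+\nm{\na^2\hu}{L^2(\Om_0)})$ assembles the desired bound $\nm{u^1|_{\Ga_D}}{H^{1/2}(\Ga_D)}\le C(1+\nm{\na\hu}{L^\infty(\Om_0)}+\nm{\na^2\hu}{L^2(\Om_0)})$, which combined with the first paragraph completes the proof.

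The principal obstacle is the apparent paradox that $\nm{h_j}{H^1(0,1)}$ blows up like $\eps^{-1/2}$ whereas the eventual $H^{1/2}$ bound must be $\mc O(1)$; the resolution is the exact cancellation between this blow-up and the $\sqrt\eps$ smallness of $\nm{h_j}{L^2(0,1)}$, captured by the interpolation inequality. A related bookkeeping concern is verifying that the lifting constant in the inverse trace theorem and the trace constant coming from $\pa_{x_i}\hu$ are independent of the size of $\Om_0$; this is handled by the dilation argument mentioned above after noting that the boundary-layer scale $\eps$ is intrinsic to the auxiliary problems and decouples from the macroscopic geometry of $\Om_0$.
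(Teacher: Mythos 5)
Your proposal is correct in substance and shares its starting point with the paper: both arguments exploit that $\corr$ minimizes the Dirichlet energy over the affine space $\set{v\in H^1(\Om_0)}{v-u^1\in V(\Om_0)}$, so it suffices to exhibit one good competitor. The competitor, however, is constructed quite differently. The paper takes the explicit lift $\eta^\eps=(1-\rho_\eps)u^1$, with a cut-off $\rho_\eps$ vanishing within distance $\eps$ of $\Ga_D$ and equal to one beyond $2\eps$, and bounds $\nm{\na\eta^\eps}{L^2(\Om_0)}$ by direct $L^2$ computations: the factors $\eps^{-1}$ coming from $\na\rho_\eps$ and from $\na\beta_i^\eps$ are absorbed by integrating the exponential decay $e^{-\delta x_2/\eps}$ of Lemma~\ref{lem:estaux} over strips of width $\mc{O}(\eps)$ --- exactly the cancellation you identify, but realized in the interior rather than on the boundary. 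Your route instead trades the problem to $\Ga_D$ via the inverse trace theorem and recovers the cancellation through the interpolation bound $\nm{h_j}{H^{1/2}(0,1)}\le C\nm{h_j}{L^2}^{1/2}\nm{h_j}{H^1}^{1/2}\le C$; this is legitimate and arguably more conceptual, at the price of heavier machinery and two bookkeeping points the paper's proof avoids. First, assembling edgewise $H^{1/2}$ bounds into a bound on $H^{1/2}(\Ga_D)$ is not purely additive: the Gagliardo seminorm couples adjacent edges across the corners, so you should either invoke a bounded extension operator from $\Ga_D$ to $\pa\Om_0$ before lifting, or estimate the cross-corner double integrals (harmless here, since $u^1$ is bounded and exponentially small near the top corners, but it must be said). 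Second, your trace inequality for $\pa_{x_i}\hu$ introduces $\nm{\na\hu}{L^2(\Om_0)}$, which does not appear on the right-hand side of~\eqref{eq:estcorr}; to preserve the claimed independence of the domain size you should work with homogeneous seminorms throughout --- quotienting by constants gives $\bigl[\pa_{x_i}\hu|_{\Ga_D}\bigr]_{H^{1/2}}\le C\nm{\na^2\hu}{L^2(\Om_0)}$, and a lift of the mean-free part plus the added-back constant (which has zero gradient) keeps the lifting estimate scale-invariant --- or else accept the factor $\abs{\Om_0}^{1/2}\le 1$ on the domains where the lemma is actually applied. In exchange, the paper's cut-off proof buys elementarity (only $L^2$ computations and the decay estimate~\eqref{eq:estaux}, with manifestly size-independent constants), while yours buys modularity: the trace/interpolation template would transfer essentially verbatim to other boundary conditions or to liftings where an explicit cut-off computation is messier.
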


\begin{proof}
Define a smooth cut-off function $\cut\in C_0^\infty(\Om_0)$ by
\[
\cut(x)=\left\{
\begin{aligned}
1,& \quad x\in\Om_0,\text{dist}(x,\Ga_D)\ge 2\eps,\\
0,&\quad x\in\Om_0,\text{dist}(x,\Ga_D)\le\eps,
\end{aligned}\right.
\]
and $\nm{\rho_\eps}{L^\infty(\Om_0)}\le 1$ and $\nm{\na \rho_\eps}{L^\infty(\Om_0)}\leq C/\eps.$

Let $\eta^{\eps}(x)=(1-\cut(x))u^1(x)$. It is clear to see
\[
\nm{\nabla\corr}{L^2(\Om_0)}\leq \nm{\na\eta^{\eps}}{L^2(\Om_0)}.
\]
A direct calculation gives
\[
\diff{\eta^{\eps}}{x_i}=-\pa_{x_i}\cut u^1+(1-\cut)\beta_j^\eps\pa_{x_ix_j}^2\hu
+(1-\cut)\Lr{\pa_{x_i}\beta_0^\eps+\pa_{x_i}\beta_j^\eps\pa_{x_j}\hu}.
\]

Using the decay estimate for $\beta_i^\eps$ in Lemma~\ref{lem:estaux}, we may bound $\nm{\na\eta^\eps}{L^2(\Om_0)}$ as follows. We only estimate the first term, other terms can be bounded similarly.
\begin{align*}
\nm{\pa_{x_i}\cut\beta_0^\eps}{L^2(\Om_0)}^2&\le C\eps^{-2}\Lr{\int_{\eps}^{2\eps}\!+\!\int_{1-2\eps}^{1-\eps}}\!\int_0^\infty \!\!e^{-2\delta x_2/\eps}\dx+C\eps^{-2}\!\!\int_0^1\!\int_{1-2\eps}^{1-\eps}e^{-2\delta x_2/\eps}\dx\\
&\le C\eps^{-2}\Lr{\eps^2+\eps e^{-2\delta/\eps}}\le C,
\end{align*}
where we have used~\eqref{eq:estaux} and the fact that $\cut$ supports in a narrow layer
of width $\mc{O}(\eps)$. Similarly, we have
\begin{align*}
\nm{\pa_{x_i}\rho_\eps\pa_{x_j}\hu\beta_j^\eps}{L^2(\Om_0)}
&\le C\nm{\na\hu}{L^\infty(\Om_0)},\\
\nm{(1-\cut)\pa_{x_ix_j}^2\hu\beta_j^\eps}{L^2(\Om_0)}&\le C\nm{\na^2\hu}{L^2(\Om_0)},\\
\nm{(1-\cut)\pa_{x_i}\beta_0^\eps}{L^2(\Om_0)}
&\le C,\\
\nm{(1-\cut)\pa_{x_j}\hu\pa_{\xi_i}\beta_j^\eps}{L^2(\Om_0)}
&\le C\nm{\na\hu}{L^\infty(\Om_0)}.
\end{align*}
Summing up all the terms, we obtain~\eqref{eq:estcorr} and complete the proof.
\end{proof}

The next theorem gives the error estimate for the first order approximation.
\begin{theorem}\label{theo:esthighorder}
Let $u^{\eps}$ and $u_0$ be the solutions of Problems~\eqref{eq:modelproblem} and~\eqref{eq:dirichlet0}, respectively.
Let $u^1$ be defined in~\eqref{eq:firstapp}. There exists $C$ independent of the size of $\Om_0$ such that
\begin{equation}\label{eq:esthighorder}
\nm{\nabla(\uu-u_1^\eps)}{L^2(\Oms)}
 \le C\eps\bigl(1+\nm{\na\hu}{W^{1,\infty}(\Om_0)}+\nm{\na^2\hu}{H^1(\Om_0)}\bigr).
\end{equation}
\end{theorem}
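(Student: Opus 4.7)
The plan is to follow a two-scale corrector argument. First I would introduce the remainder
\[
R^\eps := \uu - u_1^\eps + \eps\,\corr = \uu - \hu - \eps u^1 + \eps \corr.
\]
Since $\uu|_{\Ga_D}=0$, $\hu|_{\Ga_D}=0$ and $(\corr-u^1)|_{\Ga_D}=0$ by~\eqref{eq:corrector}, we have $R^\eps \in V(\Oms)$. Using the weak formulation~\eqref{eq:weakform}, a test of the identity $a_{\Oms}(R^\eps,R^\eps)$ reduces the proof to estimating two residuals: the interior residual $-\Delta u_1^\eps - f$ in $\Oms$ and the boundary residual $\pa u_1^\eps/\pa n - g_\eps$ on $\Ga_\eps$. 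The corrector term will be controlled a posteriori by Lemma~\ref{lema:estcorr} and incorporated via the triangle inequality.

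For the interior residual, using $-\Delta\hu = f$ in $\Om_0$ and the harmonicity of $\beta_0,\beta_1,\beta_2$ in $\tube$, a direct computation gives
\[
-\Delta u_1^\eps = f - 2(\na_\xi\beta_j)(x/\eps)\cdot\na\pa_{x_j}\hu + \eps\,\beta_j^\eps\pa_{x_j}f.
\]
The key feature is that $\na_\xi\beta_j$ and $\beta_j$ decay exponentially away from $\Sigma$ by Lemma~\ref{lem:estaux}, so their $L^2(\Oms)$ norms are $\mc{O}(\sqrt\eps)$, restricted essentially to a strip $S_\eps$ of width $\mc{O}(\eps)$ along $\Ga_\eps$. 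Combining this with Lemma~\ref{lema:strip} applied to $\na\pa_{x_j}\hu$ and $\pa_{x_j}f$ will convert the $\sqrt\eps$ from decay and the $\sqrt\eps$ from the strip into a full factor of $\eps$ against test functions, yielding a bound of the required form after using Poincar\'e on $R^\eps$.

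For the boundary residual on $\Ga_\eps$, I would expand $\pa u_1^\eps/\pa n$ using the chain rule $\pa(\beta_j^\eps)/\pa n = \eps^{-1}(\pa\beta_j/\pa\nu)|_\Sigma$, so that the prefactor $\eps$ cancels. Inserting the boundary conditions from~\eqref{eq:aux1}--\eqref{eq:aux3} and the identity $\pa_{x_2}\hu|_{\Ga_0}=-r\hg$ (which follows from testing~\eqref{eq:dirichlet0}) produces a telescoping cancellation, leaving only the terms $\eps\beta_j^\eps\pa_n\pa_{x_j}\hu$ and the Taylor remainder coming from evaluating $\na\hu$ at $(x_1,\eps\gamma(x_1/\eps))$ versus at $(x_1,0)$. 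Both remainders are genuinely $\mc{O}(\eps)$ on $\Ga_\eps$, and a trace-type estimate together with Lemma~\ref{lema:strip} and the extension Lemma~\ref{lema:extension} will bound the resulting boundary integrals by $C\eps\|\na\hu\|_{W^{1,\infty}(\Om_0)}\|\na R^\eps\|_{L^2(\Oms)}$, plus lower-order terms.

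The main obstacle I expect is the bookkeeping in the boundary residual step: matching the oscillatory part $g(\xi_1)-\hg$ provided by $\beta_0$, the geometric tilt $\gamma'/\sqrt{1+\gamma'^2}$ provided by $\beta_1$, and the length correction $1/\sqrt{1+\gamma'^2}-1/r$ provided by $\beta_2$, and verifying that the three contributions combine with $\pa\hu/\pa n$ to exactly cancel $g_\eps$ up to $\mc{O}(\eps)$. Keeping the constants independent of the size of $\Om_0$ requires avoiding any global embedding with implicit domain-dependent constants: all intermediate steps should be made via Poincar\'e on $V(\Om_0)$ (valid because $R^\eps$ vanishes on the Dirichlet portion), the extension estimate~\eqref{eq:extension}, and the strip inequality~\eqref{eq:strip}. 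Once $\|\na R^\eps\|_{L^2(\Oms)}\le C\eps(1+\|\na\hu\|_{W^{1,\infty}(\Om_0)}+\|\na^2\hu\|_{H^1(\Om_0)})$ is established, the triangle inequality with $\|\eps\na\corr\|_{L^2(\Om_0)}$ controlled by Lemma~\ref{lema:estcorr} yields~\eqref{eq:esthighorder}.
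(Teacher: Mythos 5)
Your proposal is correct and follows essentially the same route as the paper: the paper carries out your residual computation in weak form via the change of variables $\xi=x/\eps$ and integration by parts against the auxiliary problems~\eqref{eq:aux1}--\eqref{eq:aux3}, arriving at exactly the cancellations you describe (including the Taylor remainder $\pa_{x_2}\hu(x_1,\eps\gamma(x_1/\eps))-\pa_{x_2}\hu(x_1,0)$ handled through Lemma~\ref{lema:strip}, the trace-inequality treatment of the exponentially decaying terms, and the closing steps of testing with the remainder, the extension Lemma~\ref{lema:extension}, Poincar\'e, and Lemma~\ref{lema:estcorr} for $\corr$). One small point in your favor: your sign convention $R^\eps=\uu-\hu-\eps u^1+\eps\corr$ is the one for which the remainder actually belongs to $V(\Oms)$, since $\corr-u^1\in V(\Om_0)$, whereas the paper's $e=\uu-\hu-\eps u^1-\eps\corr$ appears to be a sign slip.
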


\begin{proof}
For any $v\in V(\Oms)$, an integration by parts yields
\[
\int_{\Oms}\na\hu\na v\dx=\int_{\Oms}fv\dx+\int_{\Ga_\eps}\diff{\hu}{n}v\dsx,
\]
which together with~\eqref{eq:weakform} gives
\begin{equation}\label{eq:step1}
\int_{\Oms}\na(\uu-\hu)\na v\dx=\int_{\Ga_\eps}\Lr{g_\eps-\diff{\hu}{n}}v\dsx.
\end{equation}

Next we calculate $\int_{\Oms}\na u^1\na v\dx$. Under the change of variables $\xi=x/\eps$, $\Oms$ is mapped
onto a domain
\[
D_{\eps,\xi}{:}=\set{\xi\in\R^2}{0<\xi_1<1/\eps,\gamma(\xi_1)<\xi_2<1/\eps}
\]
with the curved boundary \(\Ga_{\eps,\xi}{:}=\set{\xi\in\R^2}{0<\xi_1<1/\eps,\xi_2=\gamma(\xi_1)}\). We
denote $D_{0,\xi}$ as the mapped domain of $\Om_0$ under this map. Notice that
for any function $v$,
\[
D_x v=\na_x v+\dfrac1\eps\na_{\xi} v.
\]
Clearly,
\[
\int_{\Oms}\na u^1\na v\dx=\int_{\Oms}\na_x u^1\na v\dx+\int_{D_{\eps,\xi}}\na_{\xi} u^1\na_{\xi} v\md\xi.
\]
A direct calculation gives
\begin{align*}
\int_{D_{\eps,\xi}}\na_{\xi} u^1\na_{\xi} v\md\xi&=\int_{D_{\eps,\xi}}\Lr{\na_{\xi}\beta_0\na_\xi v+\na_\xi\beta_i\na_\xi\Lr{\diff{\hu}{x_i}v}}\md\xi\\
&\quad+\int_{D_{\eps,\xi}}\diff{}{x_i}(\na_\xi\hu)\Lr{\beta_i\na_\xi v-v\na_\xi\beta_i}\md\xi.
\end{align*}
Using the definition of $\{\beta_i\}_{i=0}^2$, an integration by parts yields
\[
\int_{D_{\eps,\xi}}\na_{\xi}\beta_0\na_\xi v\md\xi=\int_{\Ga_{\eps,\xi}}\Lr{g-\hg}v\dsxi,
\]
and
\begin{align*}
\int_{D_{\eps,\xi}}\na_\xi\beta_i\na_\xi\Lr{\diff{\hu}{x_i}v}\md\xi
&=-\int_{\Ga_{\eps,\xi}}\Lr{n_{\xi}^1\diff{\hu}{x_1}+n_{\xi}^2\diff{\hu}{x_2}}v\dsxi-\dfrac1{r}\int_{\Ga_{\eps,\xi}}\diff{\hu}{x_2}v\dsxi\\
&=-\dfrac1{\eps}\int_{\Ga_{\eps,\xi}}\diff{\hu}{n_{\xi}}v\dsxi-\dfrac{1}{r\eps}\int_{\Ga_{\eps,\xi}}\diff{\hu}{\xi_2}v\dsxi.
\end{align*}
Using the fact that $\pa\hu/\pa n=r\hg$ on $\Ga_0$, we rewrite the last term in the
right-hand side of the above identity as
\begin{align*}
-\dfrac{1}{r\eps}\int_{\Ga_{\eps,\xi}}\diff{\hu}{\xi_2}v\dsxi
&=-\dfrac{1}{r\eps}\int_{\Ga_{\eps,\xi}}\diff{\hu}{\xi_2}(\xi_1,0)v\dsxi\\
&\quad-\dfrac{1}{r\eps}\int_{\Ga_{\eps,\xi}}\Lr{\diff{\hu}{\xi_2}(\xi_1, \gamma(\xi_1))-\diff{\hu}{\xi_2}(\xi_1,0)}v\dsxi\\
&=\int_{\Ga_{\eps,\xi}}\hg v\dsxi
-\dfrac{1}{r\eps}\int_{D_{0,\xi}\backslash D_{\eps,\xi}}\dfrac{\pa^2\hu}{\pa\xi_2^2}\md\xi.
\end{align*}
Combining the above three equations, we obtain
\begin{align*}
\int_{D_{\eps,\xi}}&\na_{\xi} u^1\na_{\xi} v\md\xi=\int_{\Ga_{\eps,\xi}}
\Lr{g-\dfrac1{\eps}\diff{\hu}{n_{\xi}}}v\dsxi\\
&\quad+\int_{D_{\eps,\xi}}\diff{}{x_i}(\na_\xi\hu)\Lr{\beta_i\na_\xi v-v\na_\xi\beta_i}\md\xi
-\dfrac{1}{r\eps}\int_{D_{0,\xi}\backslash D_{\eps,\xi}}v\dfrac{\pa^2\hu}{\pa\xi_2^2}\md\xi.
\end{align*}
Denote $e=\uu-\hu-\eps u^1-\eps\corr$, we have the following error expansion:
\begin{align*}
\int_{\Oms}\na e\na v\dx&=\dfrac1{r}\int_{\Om_0\backslash\Oms}\dfrac{\pa^2\hu}{\pa x_2^2} v\dx
-\eps\int_{\Oms}\na_x u^1\na v\dx-\eps\int_{\Oms}\na\corr\na v\dx\\
&\quad-\int_{D_{\eps,\xi}}\diff{}{\xi_i}(\na_\xi\hu)\Lr{\beta_i\na_\xi v-v\na_\xi\beta_i}\md\xi.
\end{align*}
By Lemma~\ref{lema:strip}, we obtain
\[
\abs{\dfrac1{r}\int_{\Om_0\backslash\Oms}\dfrac{\pa^2\hu}{\pa x_2^2} v\dx}\le C\eps\nm{\na^2\hu}{H^1(\Om_0)}\nm{v}{H^1(\Om_0)}.
\]
The second term can be bounded as
\[
\abs{\eps\int_{\Oms}\na_x u^1\na v\dx}\le\eps\nm{\na_x u^1}{L^2(\Oms)}\nm{\na v}{L^2(\Oms)}
\le C\eps\nm{\na^2\hu}{L^2(\Oms)}\nm{\na v}{L^2(\Oms)},
\]
where $C$ depends on $\nm{\beta_i}{L^\infty}$, which are uniformly bounded.

We transform the last integrand back to $\Oms$ as
\[
\int_{D_{\eps,\xi}}\diff{}{\xi_i}(\na_\xi\hu)\Lr{\beta_i\na_\xi v-v\na_\xi\beta_i}\md\xi
=\eps\int_{\Oms}\diff{}{x_i}(\na_x\hu)\Lr{\beta_i\na_x v-v\na_x\beta_i}\dx.
\]
The first term can be bounded as
\[
\eps\abs{\int_{\Oms}\diff{}{x_i}(\na_x\hu)\beta_i\na_x v}\le\eps
\max_i\nm{\beta_i}{L^\infty}\nm{\na^2\hu}{L^2(\Oms)}\nm{\na v}{L^2(\Oms)}.
\]
By Lemma~\ref{lem:estaux}, we bound the second term as
\begin{align*}
\eps\abs{\int_{\Oms}\diff{}{x_i}(\na_x\hu)v\na_x\beta_i\dx}&\le C\nm{\na^2\hu}{L^\infty(\Oms)}\int_{\Oms}e^{-\delta x_2/\eps}\abs{v}\dx\\
&\le C\nm{\na^2\hu}{L^\infty(\Oms)}\int_0^\infty\!\! e^{-\delta x_2/\eps}\!\int_0^1\!\abs{v}\dx_1\!\dx_2.
\end{align*}
By trace inequality, for any $x_2\in (0,1)$, we have
\[
\int_0^1\abs{v(x_1,x_2)}\md x_1\le C\nm{v}{H^1(\Oms)}.
\]
Combining the above two inequalities, we obtain
\[
\eps\abs{\int_{\Oms}\diff{}{x_i}(\na_x\hu)v\na_x\beta_i\dx}\le  C\eps\nm{\na^2\hu}{L^\infty(\Oms)}\nm{v}{H^1(\Oms)}.
\]
Summing up all the estimates, we obtain that for any $v\in V_0(\Om_\eps)$,
\begin{align}\label{eq:laststep}
\abs{\int_{\Oms}\na e\na v\dx}&\le C\eps\Lr{\nm{\na^2\hu}{H^1(\Om_0)}+\nm{\na^2\hu}{L^\infty(\Oms)}+1}\nm{v}{H^1(\Om_0)}\nn\\
&\quad+\eps\nm{\na\corr}{L^2(\Oms)}\nm{\na v}{L^2(\Oms)}.
\end{align}
Since $\Ga_\eps$ is uniformly Lipschitz, we can extend $\uu$ from $\Oms$ to $\Om_0$ so that
\[
\nm{e}{H^1(\Om_0)}\le C\nm{e}{H^1(\Oms)},
\]
where $C$ only depends on $\nm{h'}{L^\infty(Y)}$ by Lemma~\ref{lema:extension}. Taking
$v=e$ in~\eqref{eq:laststep}, we obtain
\begin{align*}
\nm{\na e}{L^2(\Oms)}^2&\le C\eps\bigl(\nm{\na^2\hu}{H^1(\Om_0)}+\nm{\na^2\hu}{L^\infty(\Oms)}+1
+\nm{\na\corr}{L^2(\Oms)}\bigr)\nm{e}{H^1(\Oms)}.
\end{align*}
Using Poincare's inequality to $e$ because $e\in V_0(\Oms)$, we obtain
\begin{align*}
\nm{\na e}{L^2(\Oms)}&\le C\eps\bigl(\nm{\na^2\hu}{H^1(\Om_0)}+\nm{\na^2\hu}{L^\infty(\Oms)}+1
+\nm{\na\corr}{L^2(\Oms)}\bigr),
\end{align*}
which together with~\eqref{eq:estcorr} yields the desired estimate~\eqref{eq:esthighorder}.
\end{proof}
\section{Error Estimate}
We are ready to prove the convergence rate of the proposed MsFEM by the homogenization results
in the last section.
For any element
$\tau_\eps$ with a rough side on $\Gamma_{\eps}$, we assume that $\tau_\eps$ is contained in its
homogenized domain $\tau_0$. Given this assumption, we could apply Theorem~\ref{theo:esthighorder}
to each element. In fact, this seemingly restrictive assumption is not essential because Theorem~\ref{theo:esthighorder} remains valid without such assumption. Therefore, the error estimate
also holds true without this assumption, which is also confirmed by the numerical examples in the next section. In addition, to avoid too many technical complexity, the estimate is restricted to the triangular element, while
the proof can be generalized to the quadrilateral element with minor modifications.
\subsection{Homogenization of multiscale basis functions}
We start with some homogenization results of the multiscale
basis functions $\Phi_{p,\tau_{\eps}}^{\mr{MS}}$.
By the homogenization results in last section, we may clarify the
zeroth order approximation and the first order approximation of $\Phi_{p,\tau_{\eps}}^{\mr{MS}}$,
which are denoted by $\Phi_{p,\tau_{\eps}}^{\mr{0}}$ and $\Phi_{p,\tau_{\eps}}^{1}$,
respectively.

Note that $\Phi_{p,\tau_{\eps}}^{\mr{0}}-\phi_{p,\tau_0}\in V_0(\tau_0)$ and
\[
a_{\tau_0}(\Phi_{p,\tau_{\eps}}^{\mr{0}},v)=(r\aver{\theta}_{p,\tau_\eps},v)_{\pa\tau_0\cap \Ga_0} \qquad\text{for all } v\in V_0(\tau_0).
\]
It is clear
\(
r\aver{\theta}_{p,\tau_\eps}=\pa_n\phi_{p,\tau_0}.
\)
We conclude that the unique solution of the above problem is $\Phi_{p,\tau_{\eps}}^{\mr{0}}=\phi_{p,\tau_0}$.

The first order corrector $\Phi_{p,\tau_{\eps}}^{\mr{1}}$ of $\Phi_{p,\tau_{\eps}}^{\mr{MS}}$ is given by
\[
\Phi_{p,\tau_{\eps}}^{\mr{1}}=\wt{\beta}_0^\eps\pa_n\phi_{p,\tau_0}+\beta_i^\eps\diff{\phi_{p,\tau_0}}{x_i},
\]
where $\wt{\beta}_0^\eps(x)=\wt{\beta}_0(x/\eps)$ with $\wt{\beta}_0$ being the solution of
\[
\left\{
\begin{aligned}
-\tri_{\xi}\wt{\beta}_0&=0,&&\qquad\text{in\quad} Z_{bl}, \\
\pa_n\wt{\beta}_0&= \dfrac{1}{r}\Big(g(\xi_1)/\hg-1\Big),&&\qquad \text{on\;}\Sigma, \\
\lim_{\xi_2\to\infty} \wt{\beta}_0&=0.&&
\end{aligned}\right.
\]
It is clear that $\wt{\beta}_0= 0$ if $g=\hg$.
When $\wt{\beta}_0= 0$, the proof is simpler than the case $\wt{\beta}_0\neq 0$ but the estimate is same. We only consider the later case in the following.

For any $\tau\in\mc{T}_h$, we define the MsFEM interpolant of $\uu$ as
\[
\Pi_{h}\uu{:}=\sum_{p\in\ms{S}(\tau)}\hu(x_p)\uph.
\]
It is clear to see $\Pi_h\uu$ reduces to the standard linear Lagrange
interpolant of $\hu$, which is denoted by $\pi_h\hu$ when $\tau$ has no side on $\Ga_\eps$. For element
$\tau_\eps$ with a rough side, we define the first-order
approximation of the MsFEM interpolant by
\[
(\Pi_h\uu)_1{:}=\sum_{p\in\ms{S}(\tau_\eps)}\hu(x_p)\Lr{\phi_{p,\tau_0}+\eps\Phi_{p,\tau_\eps}^1}.
\]
The interpolate estimate is based on the following decomposition
\begin{equation}\label{eq:dec}
\uu-\Pi_h\uu=\Lr{\uu-\uu_1}+\Lr{\uu_1-(\Pi_h\uu)_1}+\Lr{(\Pi_h\uu)_1-\Pi_h\uu}.
\end{equation}

The following lemma is a direct consequence of Theorem~\ref{theo:esthighorder}.
%
\begin{lemma}\label{lem:homoMsFEM}
For any rough-sided element $\tau_{\eps}$, we have
\begin{equation}\label{eq:estbasis1}
\nm{\na\Pi_h\uu-\na(\Pi_h\uu)_1}{L^2(\tau_\eps)}\le C\eps(1+\nm{\na\hu}{L^\infty(\tau_0)}).
\end{equation}
\end{lemma}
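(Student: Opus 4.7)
The plan is to exploit linearity of the multiscale construction and apply Theorem~\ref{theo:esthighorder} once, to an aggregated function, rather than basis-by-basis. Per-basis application would fail, because $\nm{\na\phi_{p,\tau_0}}{L^\infty(\tau_0)}=\mc{O}(1/h)$, so the bound $C\eps(1+\nm{\na\hu}{L^\infty(\tau_0)})$ only emerges after combining the basis functions with weights $\hu(x_p)$ and observing that the gradient of the linear Lagrange interpolant is controlled by $\nm{\na\hu}{L^\infty(\tau_0)}$ uniformly in $h$.

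Concretely, set $U^\eps:=\Pi_h\uu|_{\tau_\eps}$, $U^0:=\sum_{p\in\ms{S}(\tau_\eps)}\hu(x_p)\phi_{p,\tau_0}=\pi_h\hu$, and $U^1:=\sum_{p\in\ms{S}(\tau_\eps)}\hu(x_p)\Phi_{p,\tau_\eps}^{1}$, so that by definition $\Pi_h\uu-(\Pi_h\uu)_1 = U^\eps - U^0 - \eps U^1$. By linearity of~\eqref{eq:cellpb}, $U^\eps$ solves $-\Delta U^\eps=0$ in $\tau_\eps$ with $U^\eps=\pi_h\hu$ on $\pa\tau_\eps\setminus\Ga_\eps$ and Neumann data $G_\eps := \sum_p \hu(x_p)\theta_{p,\tau_\eps}$ on $\pa\tau_\eps\cap\Ga_\eps$. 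Using~\eqref{eq:cellbcd}, $G_\eps$ equals $\pa_n\pi_h\hu/r$ in Case~1 and $(g_\eps/\aver{g_\eps})\pa_n\pi_h\hu/r$ in Case~2, with $\pa_n\pi_h\hu$ constant along the edge; in particular $G_\eps$ inherits assumption~\eqref{e:assump} from $g_\eps$. Hence the triple $(U^\eps, U^0, U^1)$ fits exactly into the homogenization framework of Section~3, with $U^0$ playing the role of $\hu$, vanishing source $f$, and corrector $U^1$ obtained by the linear combination in~\eqref{eq:firstapp} (the extra $\tilde\beta_0^\eps\pa_n\phi_{p,\tau_0}$ term reproduces the oscillating part of $G_\eps$ via the definition of $\tilde\beta_0$).

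Applying Theorem~\ref{theo:esthighorder} on $\tau_\eps$ with these identifications gives
\[
\nm{\na(U^\eps-U^0-\eps U^1)}{L^2(\tau_\eps)}
\le C\eps\bigl(1+\nm{\na U^0}{W^{1,\infty}(\tau_0)}+\nm{\na^2 U^0}{H^1(\tau_0)}\bigr).
\]
Since $U^0=\pi_h\hu$ is linear on $\tau_0$, one has $\na^2 U^0\equiv 0$, and by the standard stability of the linear Lagrange interpolant on a shape-regular triangle $\nm{\na\pi_h\hu}{L^\infty(\tau_0)}\le C\nm{\na\hu}{L^\infty(\tau_0)}$. Substituting these bounds yields the estimate~\eqref{eq:estbasis1}.

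The main obstacle is technical rather than conceptual: strictly speaking, Theorem~\ref{theo:esthighorder} was proved for the rectangular domain $\Oms$ in~\eqref{eq:square} with homogeneous Dirichlet data on $\Ga_D$, whereas here it is invoked on a triangle $\tau_\eps$ with the inhomogeneous trace $\pi_h\hu$ on $\pa\tau_\eps\setminus\Ga_\eps$. The paragraph opening Section~4 already flags that the theorem carries over to triangular elements without the assumption $\tau_\eps\subset\tau_0$; the inhomogeneous Dirichlet trace can be removed by subtracting the harmonic extension of the linear function $\pi_h\hu$, which reduces the situation to the homogeneous case treated in Section~3 without affecting the final bound, since the boundary trace of a linear function contributes no new terms to the energy estimate beyond those already accounted for via $\nm{\na U^0}{L^\infty}$.
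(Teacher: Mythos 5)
Your proof is correct and coincides with the paper's own argument: the paper offers no written proof, stating only that Lemma~\ref{lem:homoMsFEM} is ``a direct consequence of Theorem~\ref{theo:esthighorder},'' and what you spell out --- applying the theorem once to the aggregated functions $\Pi_h\uu$, $\pi_h\hu$ and the corrector in~\eqref{eq:interobservation}, using that the constant in~\eqref{eq:esthighorder} is independent of the domain size, that $U^0=\pi_h\hu$ is linear so all terms involving $\na^2 U^0$ vanish, and that $\nm{\na\pi_h\hu}{L^\infty(\tau_0)}\le C\nm{\na\hu}{L^\infty(\tau_0)}$ on a shape-regular element --- is precisely the intended element-level application, with $\Phi^{\mr 0}_{p,\tau_\eps}=\phi_{p,\tau_0}$ supplying the identification of the homogenized limit as in Section~4.1. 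Your additional remarks (the failure of a naive basis-by-basis bound due to $\nm{\na\phi_{p,\tau_0}}{L^\infty}=\mc{O}(1/h)$, and the reduction of the inhomogeneous linear trace on $\pa\tau_\eps\setminus\Ga_\eps$ by subtracting the linear function itself, which is harmonic) fill in details the paper leaves unstated, and they are handled correctly.
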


By definition, we rewrite $(\Pi_h\uu)_1$ as
\begin{equation}\label{eq:interobservation}
(\Pi_h\uu)_1=\pi_h\hu+\eps\wt{\beta}_0^\eps\diff{\pi_h\hu}{n}+\eps\beta_i^\eps\diff{\pi_h\hu}{x_i},
\end{equation}
and
\begin{equation}\label{eq:ander1st}
\uu_1=\hu+\eps\wt{\beta}_0^\eps\diff{\hu}{n}+\eps\beta_i^\eps\diff{\hu}{x_i}.
\end{equation}

A direct consequence of the representations~\eqref{eq:interobservation} and~\eqref{eq:ander1st}
is
\begin{lemma}\label{lem:est1stTerm}
For any rough-sided element $\tau_{\eps}$,  we have
\[
\nm{\na\uu_1-\na(\Pi_h\uu)_1}{L^2(\tau_\eps)}\le C
\Lr{h_{\tau_\eps}+\eps}\nm{D^2\hu}{L^2(\tau_\eps)}.
\]
\end{lemma}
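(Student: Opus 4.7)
\medskip

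The plan is to subtract the two expressions~\eqref{eq:interobservation} and~\eqref{eq:ander1st} directly. Setting $e_h := \hu - \pi_h\hu$, one obtains
\[
\uu_1 - (\Pi_h\uu)_1 \;=\; e_h \;+\; \eps\,\wt{\beta}_0^{\eps}\,\diff{e_h}{n} \;+\; \eps\,\beta_i^{\eps}\,\diff{e_h}{x_i},
\]
so that the task reduces to bounding the $L^2$ norm of the gradient of each of these three terms on $\tau_\eps$.

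The first term is standard: by the linear Lagrange interpolation estimate on the triangle $\tau_\eps$,
\[
\nm{\na e_h}{L^2(\tau_\eps)} \;\le\; C\,h_{\tau_\eps}\,\nm{D^2\hu}{L^2(\tau_\eps)}.
\]
For the remaining two terms I would expand by the product rule. Here I would use two structural observations: (i) since $\pi_h\hu$ is affine on $\tau_\eps$, its directional derivatives $\pa_n\pi_h\hu$ and $\pa_{x_i}\pi_h\hu$ are constants, so $\na(\pa_n e_h)=\na(\pa_n\hu)$ and $\na(\pa_{x_i}e_h)=\na(\pa_{x_i}\hu)$; (ii) by the chain rule,
\[
\eps\,\na_x\beta_i^\eps(x) \;=\; (\na_\xi\beta_i)(x/\eps),
\]
so by Lemma~\ref{lem:estaux} (and its analogue for $\wt{\beta}_0$), the quantities $\eps\na\beta_i^\eps$ and $\eps\na\wt{\beta}_0^\eps$ are uniformly bounded in $L^\infty(\tau_\eps)$, as are $\beta_i^\eps$ and $\wt{\beta}_0^\eps$ themselves.

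Combining these, the gradient of $\eps\,\beta_i^\eps\,\pa_{x_i}e_h$ splits into $(\na_\xi\beta_i)\pa_{x_i}e_h$, bounded by $C\nm{\na e_h}{L^2(\tau_\eps)} \le Ch_{\tau_\eps}\nm{D^2\hu}{L^2(\tau_\eps)}$, and $\eps\,\beta_i^\eps\,\na(\pa_{x_i}\hu)$, bounded by $C\eps\nm{D^2\hu}{L^2(\tau_\eps)}$. The term with $\wt{\beta}_0^\eps$ is handled identically. Adding the three contributions yields the claimed bound $C(h_{\tau_\eps}+\eps)\nm{D^2\hu}{L^2(\tau_\eps)}$.

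There is no serious obstacle here; this is essentially a bookkeeping exercise. The only subtle point is the use of the linearity of $\pi_h\hu$ to kill the would-be $\eps^{-1}$ factor coming from differentiating the second derivatives of $\pi_h\hu$ (which simply vanish), leaving only terms in which either a factor of $\eps$ multiplies a second derivative of $\hu$, or a factor of $h_{\tau_\eps}$ appears through an interpolation estimate on $\na e_h$.
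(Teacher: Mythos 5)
Your proposal is correct, and its skeleton is the same as the paper's: you subtract~\eqref{eq:interobservation} from~\eqref{eq:ander1st} to get $\uu_1-(\Pi_h\uu)_1=e_h+\eps\wt{\beta}_0^\eps\,\pa_n e_h+\eps\beta_i^\eps\,\pa_{x_i}e_h$ with $e_h=\hu-\pi_h\hu$, expand by the product rule, and use the affineness of $\pi_h\hu$ to kill all second derivatives of the interpolant --- exactly the paper's five-term expansion, with the same treatment of the terms $\eps\wt{\beta}_0^\eps\,\na\pa_n\hu$ and $\eps\beta_i^\eps\,\na\pa_{x_i}\hu$ and of $\na e_h$ itself. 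The one step where you genuinely diverge is the estimation of $\eps\na\wt{\beta}_0^\eps\,\pa_n e_h$ and $\eps\na\beta_i^\eps\,\pa_{x_i}e_h$: you absorb the $\eps$ by the chain rule and invoke only the uniform bound $\nm{\na_\xi\beta_i}{L^\infty(\tube)}\le C$ from Lemma~\ref{lem:estaux} (and its analogue for $\wt{\beta}_0$), which yields $C\nm{\na e_h}{L^2(\tau_\eps)}\le Ch_{\tau_\eps}\nm{\na^2\hu}{L^2(\tau_\eps)}$. The paper instead keeps the exponential decay $e^{-\delta x_2/\eps}$, which localizes these terms to an $\mc{O}(\eps)$ layer along the rough edge, and combines a trace inequality on $\tau_\eps$ with the interpolation estimate to obtain $C(h_{\tau_\eps}/\eps)^{1/2}\nm{\na^2\hu}{L^2(\tau_\eps)}$ before multiplying by $\eps$, i.e.\ the bound $C(\eps h_{\tau_\eps})^{1/2}\nm{\na^2\hu}{L^2(\tau_\eps)}$, which is sharper than your $Ch_{\tau_\eps}$ in the relevant regime $\eps\ll h_{\tau_\eps}$; since $(\eps h_{\tau_\eps})^{1/2}\le\tfrac12(\eps+h_{\tau_\eps})$, the lemma as stated does not exploit this refinement, so your simpler route loses nothing at the level of the claim while avoiding the trace-inequality machinery. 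One caveat you share with the paper: the interpolation estimate $\nm{\na e_h}{L^2(\tau_\eps)}\le Ch_{\tau_\eps}\nm{\na^2\hu}{L^2(\tau_\eps)}$ is invoked on the rough element $\tau_\eps$, which is not a triangle; this is covered by the standing assumption $\tau_\eps\subset\tau_0$ made at the start of Section~4 (apply the estimate on the homogenized triangle $\tau_0$), and the paper asserts it in exactly the same implicit way, so it is not a gap relative to the paper's own argument.
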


\begin{proof}
A direct calculation gives that for $i=1,2$,
\begin{align*}
\diff{}{x_i}\Lr{\uu_1-\na(\Pi_h\uu)_1}&=\diff{}{x_i}(\hu-\pi_h\hu)+\eps\diff{\wt{\beta}_0^\eps}{x_i}\diff{}{n}(\hu-\pi_h\hu)\\
&\quad+\eps\wt{\beta}_0^\eps\dfrac{\pa^2\hu}{\pa n\pa x_i}
+\eps\diff{  {\beta}_j^\eps}{x_i}\diff{}{x_j}(\hu-\pi_h\hu)
+\eps  {\beta}_j^\eps\dfrac{\pa^2\hu}{\pa x_i\pa x_j}.
\end{align*}
Note that $\wt{\beta}_0$ satisfies the same decay estimate~\eqref{eq:estaux} as $\beta_0$, and proceeding
along the same line that leads to Lemma~\ref{lema:estcorr}, we obtain
\begin{align*}
\int_{\tau_\eps}\abs{\na\wt{\beta}_0^\eps\pa_n(\hu-\pi_h\hu)}^2\dx&\le C\eps^{-2}\int_{\tau_\eps\cap\Ga_\eps}\abs{\na(\hu-\pi_h\hu)}^2\dx_1\int_0^h e^{-2\delta x_2/\eps}\dx_2\\
&\le C\eps^{-1}\int_{\tau_\eps\cap\Ga_\eps}\abs{\na(\hu-\pi_h\hu)(x_1,x_2)}^2\dx_1.
\end{align*}
By the trace inequality, we get
\begin{align*}
\nm{\na\wt{\beta}_0^\eps\pa_n(\hu-\pi_h\hu)}{L^2(\tau_\eps)}&\le C\eps^{-1/2}\nm{\na(\hu-\pi_h\hu)}{L^2(\tau_\eps)}
^{1/2}\nm{\na^2(\hu-\pi_h\hu)}{L^2(\tau_\eps)}^{1/2}\\
&\le C(h_{\tau_\eps}/\eps)^{1/2}\nm{\na^2\hu}{L^2(\tau_\eps)}.
\end{align*}

Proceeding along the same line that leads to the above inequality, we obtain
\[
\nm{\na\beta_i^\eps\pa_{x_i}(\hu-\pi_h\hu)}{L^2(\tau_\eps)}\le C(h_{\tau_\eps}/\eps)^{1/2}\nm{\na^2\hu}{L^2(\tau_\eps)}.
\]
The remaining terms may be bounded as follows.
\[
\nm{\na(\hu-\pi_h\hu)}{L^2(\tau_\eps)}\le Ch_{\tau_\eps}\nm{\na^2\hu}{L^2(\tau_\eps)},
\]
and
\[
\nm{\wt{\beta}_0^\eps\pa_n(\na\hu)}{L^2(\tau_\eps)}\le C\nm{\na^2\hu}{L^2(\tau_\eps)},\quad
\nm{\beta_i^\eps\pa_{x_i}(\na\hu)}{L^2(\tau_\eps)}\le C\nm{\na^2\hu}{L^2(\tau_\eps)}.
\]
Combining the above estimates, we obtain
the desired estimate.
\end{proof}

For any element $\tau$ without rough edge, $\Pi_h\uu=\pi_h\hu$. The decomposition~\eqref{eq:dec} is replaced by $\uu-\Pi_h\uu=\uu-\uu_1+\hu-\pi_h\hu+\eps u^1$. Therefore, we need the a priori estimate for $u^1$ over elements without rough edge. We divide the elements into three groups; the elements with one rough edge belong to $\mc{T}_h^1$, the elements with one vertex on the rough boundary belong to $\mc{T}_h^2$, and
the remaining elements belong to $\mc{T}_h^3$.
\begin{lemma}\label{lem:estU1}
When $\tau\in\mc{T}_h^2$, then
\begin{equation}\label{eq:estU1b}
\begin{aligned}
\sum_{\tau\in\mc{T}_h^2}\nm{\na u^1}{L^2(\tau)}^2&\le Ch^{-1}\Lr{1
+\nm{\na\hu}{L^{\infty}(\Oms)}^2}+C\sum_{\tau\in\mc{T}_h^2}\nm{\na^2\hu}{L^2(\tau)}^2.
\end{aligned}
\end{equation}

When $\tau\in\mc{T}_h^3$, then
\begin{equation}\label{eq:estU1a}
\begin{aligned}
\sum_{\tau\in\mc{T}_h^3}\nm{\na u^1}{L^2(\tau)}^2&\le C\dfrac{h}\eps\Lr{1+\nm{\na\hu}{W^{1,\infty}(\Oms)}^2}+C\sum_{\tau\in\mc{T}_h^3}\nm{\na^2\hu}{L^2(\tau)}^2.
\end{aligned}
\end{equation}
\end{lemma}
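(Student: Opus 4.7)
The plan is to expand \(\nabla u^1\) directly from \(u^1 = \beta_0^\eps + \beta_i^\eps\,\partial_{x_i}\hu\) as
\[
\nabla u^1 = \nabla\beta_0^\eps + (\nabla\beta_i^\eps)\,\partial_{x_i}\hu + \beta_i^\eps\,\nabla\partial_{x_i}\hu,
\]
and then to control each of the three pieces using the pointwise decay \(|\beta_i^\eps(x)|+\eps\,|\nabla_x\beta_i^\eps(x)|\le Ce^{-\delta x_2/\eps}\) furnished by Lemma~\ref{lem:estaux}. The last summand is immediately absorbed into the \(C\sum_\tau\nm{\na^2\hu}{L^2(\tau)}^2\) term on both right-hand sides via \(|\beta_i^\eps|\le C\). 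The first two summands each carry the weight \(\eps^{-2}e^{-2\delta x_2/\eps}\), multiplied respectively by the constant \(1\) and by \(|\partial_{x_i}\hu|^2\), the latter being pulled out as \(\nm{\na\hu}{L^\infty(\Oms)}^2\) (or \(\nm{\na\hu}{W^{1,\infty}(\Oms)}^2\) when a second derivative is peeled off). The entire argument therefore reduces to a sharp element-by-element bound on \(\eps^{-2}\int_\tau e^{-2\delta x_2/\eps}\dx\), and the two classes \(\mc{T}_h^2\) and \(\mc{T}_h^3\) call for genuinely different geometric input.

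For \(\tau\in\mc{T}_h^2\), I would exploit the crucial fact that such a triangle has exactly one vertex on \(\Ga_\eps\) while the opposite two vertices sit at heights \(\gtrsim h\); consequently the horizontal cross-section of \(\tau\) at height \(x_2\) has length bounded by \(Cx_2\) rather than by \(Ch\). This turns the weighted integral into
\[
\int_\tau e^{-2\delta x_2/\eps}\dx \le C\int_0^{Ch}x_2\,e^{-2\delta x_2/\eps}\md x_2 \le C\eps^2,
\]
so \(\eps^{-2}\int_\tau e^{-2\delta x_2/\eps}\dx\le C\) per element, and summing the \(\mc{O}(h^{-1})\) boundary-vertex triangles of \(\mc{T}_h^2\) delivers the \(Ch^{-1}\) prefactor in~\eqref{eq:estU1b}.

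For \(\tau\in\mc{T}_h^3\), every element stands off from \(\Ga_\eps\) by a distance at least \(c_0 h\), so the sum telescopes into an integral over the strip \(\{x_2\ge c_0 h\}\), where exponential decay of \(\beta_i^\eps\) and \(\nabla\beta_i^\eps\) does the heavy lifting. Bounding this far-field integral by elementary means and pairing it with a uniform \(L^\infty\) control of \(\na^2\hu\) (hence the appearance of the \(W^{1,\infty}\) norm) yields the \(Ch/\eps\) prefactor in~\eqref{eq:estU1a}, while the residual \(L^2\) piece involving \(\beta_i^\eps\,\na^2\hu\) contributes the \(\sum\nm{\na^2\hu}{L^2(\tau)}^2\) term.

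The central obstacle is the \(\mc{T}_h^2\) case: a naive rectangular bound \(|\tau|\cdot\eps^{-2}\) would produce an uncontrolled \(\eps^{-2}\)-blow-up per element, and only the linearly tapering cross-section of the triangle towards its sole boundary vertex allows one to trade the bad \(\eps^{-2}\) against an \(\eps^2\) in the weighted integral. Once this geometric bookkeeping is in place, the remaining estimates over both \(\mc{T}_h^2\) and \(\mc{T}_h^3\) parallel the cut-off calculation already carried out in Lemma~\ref{lema:estcorr}.
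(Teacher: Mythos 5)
Your proposal is correct and follows essentially the same route as the paper's proof: the same expansion of $\na u^1$ with the decay estimate~\eqref{eq:estaux}, the same key geometric observation for $\tau\in\mc{T}_h^2$ that the cross-section at height $x_2$ tapers linearly (the paper writes the two sides as $x_1=\alpha_1x_2$, $x_1=\alpha_2x_2$), so that $\eps^{-2}\int_\tau e^{-2\delta x_2/\eps}\dx\le C$ per element and the $\mc{O}(h^{-1})$ cardinality yields the $Ch^{-1}$ prefactor. For $\mc{T}_h^3$ the paper organizes the far-field decay as a layer-by-layer bound $C(h_\tau/\eps)^{1/2}\exp(-c_0\delta kh/\eps)$ summed as a geometric series, which is computationally equivalent to your single integral over the strip $\{x_2\ge c_0h\}$.
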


\begin{proof}
For an element $\tau\in\mc{T}_h^2$, we assume that the two sides intersect with $\Ga_\eps$ are given explicitly by
$x_1=\alpha_1 x_2$ and $x_1=\alpha_2 x_2$, with $|\alpha_i|\leq c_1$ and the bound $c_1$ depends only on the minimal
angle of $\tau$. Using~\eqref{eq:estaux}, a direct calculation gives
\begin{align*}
\nm{\na \beta_0^{\eps}}{L^2(\tau)}\!
\le C\eps^{-1}\!\Lr{\int_{\tau}e^{-{2\delta x_2}/{\eps}}\mr d\mr x}^{1/2} \!\!\!\! \le \! C \eps^{-1}\!\Lr{\!\int_{0}^\infty\!\!(\alpha_2-\alpha_1)x_2e^{-{2\delta x_2}/{\eps}}\dx_2}^{1/2}
\!\!\! \le  C .
\end{align*}

A direct calculation gives that for $i=1,2$, there holds
\[
\nm{\na(\partial_{x_i} \hu\beta_i^\eps)}{L^2(\tau)}\le
C\Lr{\nm{\na\hu}{L^{\infty}(\tau)}+\nm{\na^2\hu}{L^2(\tau)}}.
\]
Combining the above estimates and using the fact that the cardinality of $\mc{T}_h^2$ is $\mc{O}(h^{-1})$, we obtain~\eqref{eq:estU1b}.

Using the facts that the triangulation is regular, for the element in $k$-th layer, there exists a constant $c_0$ such that $c_0kh\le\text{dist}(\tau,\Ga_\eps)\le c_0(k+1)h$. By~\eqref{eq:estaux}, a direct calculation gives
\begin{align*}
\nm{\na \beta_0^{\eps}}{L^2(\tau)}&\leq C\eps^{-1}\Lr{\int_{\tau}e^{-{2\delta x_2}/{\eps}}\mr d\mr x}^{1/2}\!\!\!\le C h_{\tau}^{1/2}\eps^{-1}\Lr{\int_{c_0kh}^{c_0(k+1)h}\!\!\!e^{-{2\delta x_2}/{\eps}}\dx_2}^{{1}/{2}}\\
&\leq C(h/\eps)^{1/2}\exp(-c_0\delta kh/\eps).
\end{align*}
Proceeding along the same line that leads to the above estimate, we have for $i=1,2$,
\[
\nm{\na(\pa_{x_i}\hu\beta_i^\eps)}{L^2(\tau)}\le
C(h_\tau/\eps)^{1/2}\exp(-c_0\delta kh/\eps)\nm{\na\hu}{L^\infty(\tau)}
+C\nm{\na^2\hu}{L^2(\tau)}
\]
A combination of the above estimates leads to
\begin{align*}
\nm{\na u^1}{L^2(\tau)}&\le C\exp{(-c_0\delta kh/\eps)}(h_{\tau}/\eps)^{1/2}\Bigl(1+\nm{\na\hu}{L^\infty(\tau)}\Bigr)+C\nm{\na^2\hu}{L^2(\tau)}.%
\end{align*}
Summing up all the elements in $\mc{T}_h^3$ leads to~\eqref{eq:estU1a}.
\end{proof}
\begin{figure}[htbp]\vspace{-0.25cm}
  \begin{center}
  \resizebox{!}{4.0cm}
  {\includegraphics[height=6.0cm]{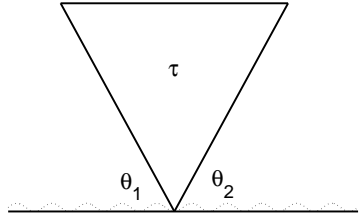}}
  \end{center}
\caption{An element with a vertex on the rough boundary.}
\label{fig:element}
\end{figure}
\subsection{Interpolation error estimate}
The next theorem gives the interpolate error estimate. 
\begin{theorem}\label{theo:MsFEMInterp}
Let $\uu$ be the solution of the problem~\eqref{eq:modelproblem}, we have
\begin{align}\label{eq:estinterp1}
\nm{\na(\uu-\Pi\uu)}{L^2(\Oms)}&
\le C\eps\Lr{\nm{\na^2\hu}{H^1(\Om_0)}+\nm{\na\hu}{W^{1,\infty}(\Om_0)}}+Ch\nm{\na^2\hu}{L^2(\Om_0)}\nn\\
&\quad+C\eps h^{-{1}/{2}}(\nm{\na\hu}{L^\infty(\Om_0)}+1).
\end{align}
\end{theorem}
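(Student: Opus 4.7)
The plan is to bound $\nm{\na(\uu-\Pi_h\uu)}{L^2(\Oms)}^2$ element by element, splitting $\mc{T}_h=\mc{T}_h^1\cup\mc{T}_h^2\cup\mc{T}_h^3$ as defined before Lemma~\ref{lem:estU1}, and using two different decompositions of the error according to whether the element carries a rough edge.

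First, on any $\tau_\eps\in\mc{T}_h^1$ I will invoke the three-term decomposition~\eqref{eq:dec} and control each piece by an already proven lemma: the term $\uu-\uu_1$ by Theorem~\ref{theo:esthighorder} applied on $\tau_\eps$ (contributing $C\eps(1+\nm{\na\hu}{W^{1,\infty}(\tau_0)}+\nm{\na^2\hu}{H^1(\tau_0)})$), the term $\uu_1-(\Pi_h\uu)_1$ by Lemma~\ref{lem:est1stTerm} (contributing $C(h+\eps)\nm{D^2\hu}{L^2(\tau_\eps)}$), and the last term $(\Pi_h\uu)_1-\Pi_h\uu$ by Lemma~\ref{lem:homoMsFEM} (contributing $C\eps(1+\nm{\na\hu}{L^\infty(\tau_0)})$). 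Squaring, summing over $\tau_\eps\in\mc{T}_h^1$, and using that the number of rough-edged elements is $\mc{O}(h^{-1})$ together with $\eps\,h^{-1/2}\le\eps h^{-1/2}$, these terms fit into the right-hand side of~\eqref{eq:estinterp1}.

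Next, on any $\tau\in\mc{T}_h^2\cup\mc{T}_h^3$ the MsFEM basis coincides with the standard linear basis, so $\Pi_h\uu|_\tau=\pi_h\hu$, and I will use the alternative decomposition
\[
\uu-\Pi_h\uu=(\uu-\uu_1)+(\hu-\pi_h\hu)+\eps u^1.
\]
The first piece is bounded globally by Theorem~\ref{theo:esthighorder}, and the second by the standard Lagrange interpolation error $Ch\nm{\na^2\hu}{L^2(\Om_0)}$. The third piece is where the boundary-layer tails $\beta_i^\eps$ have to be controlled away from $\Ga_\eps$; this is exactly what Lemma~\ref{lem:estU1} does, splitting the sum into a $\mc{T}_h^2$ part (where decay does not help because the element touches $\Ga_\eps$ at a vertex) and a $\mc{T}_h^3$ part (where the exponential decay in $x_2/\eps$ is summed in a geometric series). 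Multiplying the two bounds in Lemma~\ref{lem:estU1} by $\eps^2$ and taking the square root yields an $\mc{O}(\eps h^{-1/2})$ contribution from $\mc{T}_h^2$ and an $\mc{O}(\sqrt{\eps h})\le\mc{O}(\eps h^{-1/2}+h)$ contribution from $\mc{T}_h^3$, both of which are absorbed by the right-hand side of~\eqref{eq:estinterp1}. Gathering everything and combining the three groups of elements gives the stated estimate.

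The main obstacle I expect is the $\mc{T}_h^2$ strip. For an element that meets $\Ga_\eps$ only at a single vertex, the corrector $\beta_0^\eps$ is not small, so its $L^2$ norm over that element is only $\mc{O}(1)$ rather than exponentially decaying, and one must rely instead on the cardinality bound $|\mc{T}_h^2|\lesssim h^{-1}$ to extract the $h^{-1/2}$ factor. This is the source of the weak resonance term $\eps h^{-1/2}$ in~\eqref{eq:estinterp1}. Everything else (trace inequalities, interpolation, and the decay estimate~\eqref{eq:estaux}) is standard once the right decomposition is in place.
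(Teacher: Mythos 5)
Your overall architecture is the same as the paper's: the three-term decomposition~\eqref{eq:dec} on the rough-edged elements $\mc{T}_h^1$, handled by Lemma~\ref{lem:est1stTerm} and Lemma~\ref{lem:homoMsFEM} with the cardinality bound $\#\mc{T}_h^1=\mc{O}(h^{-1})$ producing the resonance factor $\eps h^{-1/2}$, and the alternative decomposition $\uu-\Pi_h\uu=(\uu-\uu_1)+(\hu-\pi_h\hu)+\eps u^1$ on $\mc{T}_h^2\cup\mc{T}_h^3$, handled by standard Lagrange interpolation plus the two cases of Lemma~\ref{lem:estU1}. Your diagnosis of where the resonance comes from (no decay help on the $\mc{T}_h^2$ wedge elements, only the $\mc{O}(h^{-1})$ counting, and likewise from Lemma~\ref{lem:homoMsFEM} on $\mc{T}_h^1$) matches the paper, and your absorption $\sqrt{\eps h}\le \eps h^{-1/2}+h$ for the $\mc{T}_h^3$ contribution is at the same level of bookkeeping as the paper's own proof.

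One step as written is not justified: you cannot apply Theorem~\ref{theo:esthighorder} ``on $\tau_\eps$'' to the piece $\uu-\uu_1$. That theorem is a statement about the global solution of~\eqref{eq:modelproblem}; its proof uses the variational equation on all of $\Oms$ together with the boundary conditions on $\Ga_D$ and $\Ga_\eps$, and the restriction of $\uu$ to a single element solves no local boundary value problem with known data on the interior edges. Element-wise use of Theorem~\ref{theo:esthighorder} is legitimate only for the multiscale basis functions, which \emph{do} solve local cell problems of exactly that form --- this is precisely how Lemma~\ref{lem:homoMsFEM} is obtained. Moreover, even granting such a local bound, squaring and summing it over the $\mc{O}(h^{-1})$ rough elements would attach the factor $h^{-1/2}$ to the $\mc{O}(1)$ and $L^\infty$-type constants, producing a term of the form $\eps h^{-1/2}\nm{\na\hu}{W^{1,\infty}(\Om_0)}$, which is strictly worse than the $\eps h^{-1/2}\bigl(\nm{\na\hu}{L^\infty(\Om_0)}+1\bigr)$ appearing in~\eqref{eq:estinterp1}. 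The repair is already contained in your own sketch: bound $\nm{\na(\uu-\uu_1)}{L^2(\Oms)}$ once, globally, by Theorem~\ref{theo:esthighorder} --- this is what the paper does --- and restrict the element-wise bookkeeping on $\mc{T}_h^1$ to the two pieces $\uu_1-(\Pi_h\uu)_1$ and $(\Pi_h\uu)_1-\Pi_h\uu$. With that correction your argument coincides with the paper's proof.
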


\begin{proof}
We start from the following decomposition~\eqref{eq:dec}. Using Lemma~\ref{lem:est1stTerm},
\[
\sum_{\tau\in\mc{T}_h^1}
\nm{\na\uu_1-\na(\Pi_h\uu)_1}{L^2(\tau)}^2\le C(\eps+h)^2\sum_{\tau\in\mc{T}_h^1}
\nm{\na^2\hu}{L^2(\tau)}^2.
\]

For $\tau\in\mc{T}_h^2$, we have $\uu_1-(\Pi_h\uu)_1=\hu-\pi_h\hu+\eps u^1$. Therefore,
\begin{align*}
\sum_{\tau\in\mc{T}_h^2}\nm{\na\uu_1-\na(\Pi_h\uu)_1}{L^2(\tau)}^2
&\le 2\sum_{\tau\in\mc{T}_h^2}\nm{\na(\hu-\pi_h\hu)}{L^2(\tau)}^2
+2\eps^2\sum_{\tau\in\mc{T}_h^2}\nm{\na u^1}{L^2(\tau)}^2.
\end{align*}
Using Lemma~\ref{lem:estU1}, we obtain
\begin{align*}
\sum_{\tau\in\mc{T}_h^2}\!\nm{\na\uu_1-\na(\Pi_h\uu)_1}{L^2(\tau)}^2
\!\le\! C(\eps+h)^2\!\!\sum_{\tau\in\mc{T}_h^2}\!\nm{\na^2\hu}{L^2(\tau)}\!\!+C\dfrac{\eps^2}h\!\Lr{1+\nm{\na\hu}{L^\infty(\tau)}^2}.
\end{align*}

Proceeding along the same line that leads to the above estimate, we obtain
\begin{align*}
\sum_{\tau\in\mc{T}_h^3}\!
\nm{\na\uu_1-\na(\Pi_h\uu)_1}{L^2(\tau)}^2\!\le\! C(\eps+h)^2\!\!\sum_{\tau\in\mc{T}_h^3}\!\nm{\na^2\hu}{L^2(\tau)}^2\!\!+C\eps h\!\Lr{1+\nm{\na\hu}{L^\infty(\Oms)}^2}.
\end{align*}
Summing up all the above estimates, we obtain
\begin{align*}
\nm{\na\uu_1-\na(\Pi_h\uu)_1}{L^2(\Oms)}&\le C(\eps+h)\nm{\na^2\hu}{L^2(\Oms)}+C\eps h^{-1/2}\Lr{1+\nm{\na\hu}{L^\infty(\Oms)}}.
\end{align*}

By Lemma~\ref{lem:homoMsFEM},
\begin{align*}
\nm{\na\Pi_h\uu-\na(\Pi_h\uu)_1}{L^2(\Oms)}^2&=\sum_{\tau\in\mc{T}_h^1}\nm{\na\Pi_h\uu-\na(\Pi_h\uu)_1}{L^2(\tau_\eps)}^2\\
&\le C\eps^2\sum_{\tau\in\mc{T}_h^1}\nm{\na\hu}{L^\infty(\tau_0)}
\le C\dfrac{\eps^2}h\nm{\na\hu}{L^\infty(\Om_0)}.
\end{align*}

Finally, the term $\nm{\na(\uu-\uu_1)}{L^2(\Oms)}$ can be bounded by Theorem~\ref{theo:esthighorder}. Summing up all
the estimates we obtain the desired estimate~\eqref{eq:estinterp1}.
\end{proof}

Using the above interpolation estimate, we obtain the main result of this paper.
\begin{theorem}
Let $\uu$ and $u_h$ be the solutions of Problem~\eqref{eq:modelproblem} and Problem \eqref{eq:MsFE}, respectively. Then
\begin{align}\label{eq:errest}
\nm{\na(\uu-u_h)}{L^2(\Oms)}&
\le C\eps\Lr{\nm{\na^2\hu}{H^1(\Om_0)}+\nm{\na\hu}{W^{1,\infty}(\Om_0)}}+Ch\nm{\na^2\hu}{L^2(\Om_0)}\nn\\
&\quad+C\eps h^{-{1}/{2}}(\nm{\na\hu}{L^\infty(\Om_0)}+1).
\end{align}
\end{theorem}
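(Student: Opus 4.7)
The proof will be essentially immediate given the machinery already in place. The plan is to combine Cea's lemma (the quasi-optimality identity~\eqref{eq:cea}) with the interpolation error estimate in Theorem~\ref{theo:MsFEMInterp}.

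First I would invoke~\eqref{eq:cea}, which holds because the method~\eqref{eq:MsFE} is a conforming Galerkin discretization ($V_h \subset V(\Omega_\eps)$) of the symmetric coercive bilinear form $a(\cdot,\cdot)$. This gives
\[
\nm{\na(\uu-u_h)}{L^2(\Oms)} = \inf_{v\in V_h}\nm{\na(\uu-v)}{L^2(\Oms)}.
\]

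Next I would observe that the MsFEM interpolant $\Pi_h \uu = \sum_{p\in\ms{S}(\Oms)} \hu(x_p)\Phi_p^{\mr{MS}}$ lies in $V_h$ by construction, since the basis functions $\Phi_p^{\mr{MS}}$ are continuous across element boundaries and vanish on $\Gamma_D$. Therefore
\[
\nm{\na(\uu-u_h)}{L^2(\Oms)} \le \nm{\na(\uu-\Pi_h\uu)}{L^2(\Oms)}.
\]
Applying Theorem~\ref{theo:MsFEMInterp} to the right-hand side yields the estimate~\eqref{eq:errest} directly.

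There is essentially no obstacle here: all difficulty has been pushed into the interpolation estimate of Theorem~\ref{theo:MsFEMInterp}, whose proof in turn relied on the homogenization Theorem~\ref{theo:esthighorder} together with the decomposition~\eqref{eq:dec} and the layer-by-layer estimates on $u^1$ via the exponential decay of the boundary-layer correctors $\beta_i$ from Lemma~\ref{lem:estaux}. The only thing to note when writing it down is that the three terms on the right-hand side of~\eqref{eq:errest} are exactly the three terms on the right-hand side of~\eqref{eq:estinterp1}, so no further bookkeeping is required. The weak-resonance term $\eps h^{-1/2}$ is inherited from the elements in $\mc{T}_h^2$ (those touching $\Gamma_\eps$ only at a vertex), where the boundary-layer correctors are not damped.
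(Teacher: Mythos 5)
Your proof is correct and is exactly the paper's argument: the theorem follows immediately from the quasi-optimality identity~\eqref{eq:cea} with the conforming interpolant $\Pi_h\uu\in V_h$ as the competitor, followed by the interpolation estimate~\eqref{eq:estinterp1} of Theorem~\ref{theo:MsFEMInterp}, whose three terms are copied verbatim into~\eqref{eq:errest}. (A minor aside: the resonance term $\eps h^{-1/2}$ is inherited not only from the elements in $\mc{T}_h^2$ but also from the rough-edged elements in $\mc{T}_h^1$ via Lemma~\ref{lem:homoMsFEM}, though this concerns the proof of Theorem~\ref{theo:MsFEMInterp} rather than the step at hand.)
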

\begin{remark}
We have not estimated the $L^2$ error of the method, because the $H^1$ error estimate is not optimal
with respect to the regularity of the data. Standard dual argument only yields a suboptimal convergence rate
as the original MsFEM~\cite{HouWuCai:1999}. This would be a topic for further study.
\end{remark}
%
\section{Numerical Examples}
In this section, we perform three numerical experiments to verify the convergence rate and efficiency of the
proposed method. We solve Problem~\eqref{eq:modelproblem} for different rough domains, different source terms
and different boundary fluxes.

\subsection{Implementation}
The implementation of the method is similar to the standard MsFEM~\cite{HouWu:1997, HouWuCai:1999}. The cell problem~\eqref{eq:cellpb} is numerically solved only for elements with a rough side,  and we use $P_1$ element to solve~\eqref{eq:cellpb} with the subgrid mesh size around $\eps/20$ in the simulations below. 

As an example, we consider a  square domain with a rough boundary. The more general case can be done similarly.
The domain $\Omega^\eps$ is given in~\eqref{eq:square} as in Figure~\ref{fig:domain4}. The function $\gamma_{\eps}\simeq\mc{O}(\eps)$ that represents the curved boundary will be specified in the examples.
We partition $\Oms$ with a uniform triangular mesh as in Figure~\ref{fig:partition}(b). For a given number $N$, we set $h=1/N$. The vertexes far away from the rough boundary are given by
$x_{i,j}=\{(ih, jh)\}$, where $i=0,\cdots N, j=1,\cdots, N$; and the vertexes on the rough boundary are given by $x_{i,0}=(ih, \gamma_{\eps}(ih))$, $i=0,\cdots N$.

The basis function is constructed as follows. For elements without rough boundary,  the basis function coincides with the standard linear basis function. For elements with a rough boundary,
we compute the multiscale basis function for an element
\[
e=\set{x\in\R^2}{0<x_1<h,\gamma_{\eps}(x_1)<x_2<\gamma_{\eps}(0)+(1-\gamma_{\eps}(0)/h)x_1}
\]
for example. Here we actually assume that $\gamma_{\eps}(x_1)<\gamma_{\eps}(0)+(1-\frac{\gamma_{\eps}(0)}{h})x_1$ holds for all $x_1\in(0,h)$. In reality, we can always make it true by choosing proper vertexes in the triangulation near the rough boundary. We firstly rescale $e$ and solve cell problem \eqref{eq:cellpb} by linear finite element over the rescaled element
\[
\hat{e}=\set{\wh{x}\in\R^2}{0<\hat{x}_1<1, h^{-1}\gamma_\eps(h\hat{x}_1)<\hat{x}_2<\gamma_\eps(0)/h +(1-\gamma_{\eps}(0)/h)\hat{x}_1}
\]
with a rough boundary $\hat{\Gamma}_e=\set{\wh{x}\in\R^2}{0<\hat{x}_1<1,\hat{x}_2=h^{-1}\gamma_\eps(h\hat{x}_1)}$.
We triangulate $\wh{e}$ by  a subgrid with maximum mesh size $\wt{h}$; see, e.g., $\tilde{h}\leq \eps/20$.  $\hat{\Gamma}_e$ is approximated by $\hat{\Gamma}_{e,\tilde{h}}$. For $g_\eps$ whose average on $\hat{\Gamma}_{e,\tilde{h}}$ is given by
\[
\aver{g_\eps}=\int_{\hat{\Gamma}_{e,\tilde{h}}}g_\eps(x_1/h)\ds/r\quad\text{with\quad}
r=\abs{\hat{\Gamma}_{e,\tilde{h}}}.
\]
The flux is given by
\[
\hat{\theta}_i(\hat{x}_1)=\left\{
\begin{aligned}
b_i/r \quad & \hbox{if } \|g_\eps-\aver{g_\eps}\|_{L^{\infty}(\hat{\Gamma}_{e,\tilde{h}})}<\eps,\\
\dfrac{b_i}{r}\dfrac{g_\eps}{\aver{g_\eps}}\quad  & \hbox{otherwise,}
\end{aligned}
\right.
\]
where $b_1=0$, $b_2=1$ and $b_3=-1$.
%


\subsection{Numerical experiments}
\textbf{First Example\;} In this example, the rough domain is given by
\[
\Omega_{\eps}= \set{x\in\R^2}{0<x_1<1,\eps \gamma(x_1/\eps)<x_2<1},
\]
where $\ga(y_1)=(\cos(2\pi y_1)-1)/10$. The rough boundary is given by
\[
\Gamma_\eps=\set{x\in\R^2}{0<x_1<1,x_2=\eps \gamma({x_1}/{\eps})},
\]
and $\Gamma_D=\partial\Omega_\eps\setminus\Gamma_\eps$.
We choose $f=1$ and $g=0$ in~\eqref{eq:modelproblem}, and set homogeneous Dirichlet
boundary condition on $\Ga_D$ and $\eps={1}/{128}$.

The grid in this example is the uniform triangular grid as the right subfigure in Fig.~\ref{fig:partition} with the mesh size $h$ varying from ${1}/{5}$ to ${1}/{160}$.
The errors are measured by
\[
\text{Err}_{L2}=\nm{u_h-\tilde{u}}{L^2(\Oms)}, \qquad\text{Err}_{H1}=\nm{\nabla u_h-\nabla \tilde{u}}{L^2(\Oms)}.
\]
Here $\tilde u$ is a solution computed on an adaptive refined mesh with mesh size $h\approx 10^{-4}$ by linear finite element.

Fig.~\ref{fig:4.1} shows two different scenarios of the convergence behaviour of the method. When the mesh size $h$
is larger than the roughness parameter $\eps$, the method is first order in the $H^1$ semi-norm and second order
in the $L^2$ norm. When $h$ is commensurate with $\eps$, the method degenerates due to the resonance error. This is consistent with our theoretical prediction, at least for the $H^1$ error.
\begin{figure}[htb!]
\hspace{-0cm}
\resizebox{!}{7.5cm} 
    {\includegraphics[height=7.5cm]{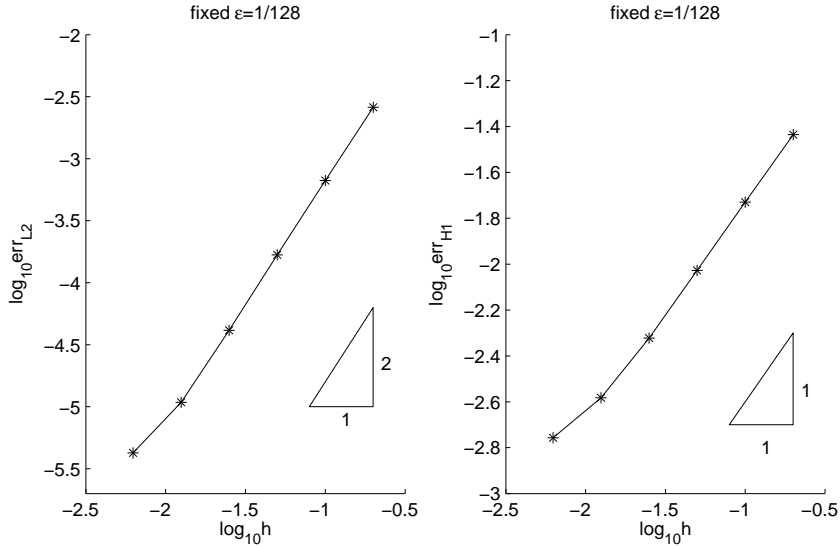}}
\caption{Convergence behavior of the method for the first example.}
\label{fig:4.1}
\end{figure}

\textbf{Second example\;}In this example, the domain $\Oms$ is the same with that in the first example. Unlike
the first example, we choose $f=0$ and an inhomogeneous boundary flux \(g_\eps=\Lr{1-\cos(2\pi x_1/\eps)}/2\).
In addition, we impose an inhomogeneous Dirichlet boundary value $u(x)=(1-x_2)/{2}$ on $\Gamma_D$.

Uniform triangular grid with the mesh size $h$ varying from $1/5$ to $1/80$ is employed in this example.
Fig.~\ref{fig:4.3} shows the convergence behavior of the method.
It is clear that the method has nearly optimal convergence rate for both the
$H^1$ semi-norm and the $L^2$ norm when $h=1/40>2\eps$, while the resonance error gets to dominate and the convergence rate degenerates when $h$ is approximately $1/80$.

In this example, we use linear finite element to solve the homogenized problem
\[
\left\{
\begin{aligned}
-\Delta\hu&=0\quad                 &&\hbox {in }\Om_0,\\
\hu&=\dfrac{1-x_2}{2}\quad  &&\hbox {on } \Ga_D,\\
\diff{\hu}{n}&= \frac{r}{2} \quad  &&\hbox {on } \Ga_0.
\end{aligned}\right.
\]
where $\Om_0=\{x\in\mathbb{R}^2 : 0<x_1<1, 0<x_2<1\}$, $\Gamma_0=\{(x_1,0):0<x_1<1\}$ and  $r=\int_{0}^1[1+(\ga'(y_1))^2]^{1/2} \dy_1 \approx 1.01$. This problem is solved by linear finite element
with a uniform triangulation of the homogenized domain $\Omega_0$.
The numerical solutions is
denoted by $u^0_h$. We compute the following quantities
\[
\text{err}_{L2}=\nm{\hu_h-\tilde{u}}{L^2(\Omega_0)},\quad\text{and}\quad
\text{err}_{H1}=\nm{\nabla \hu_h-\nabla \tilde{u}}{L^2(\Omega_0)},
\]
which is reported in Figure~\ref{fig:4.3}. It seems that the linear finite element method is less accurate as MsFEM.
This is due to the fact that the homogenization errors dominate as the
mesh is refined. This degeneracy of the convergence rate is more significant for the $L^2$ error.
\begin{figure}[htb!]
\hspace{-0cm}
\resizebox{!}{7.5cm} 
    {\includegraphics[height=7.5cm,width=12cm]{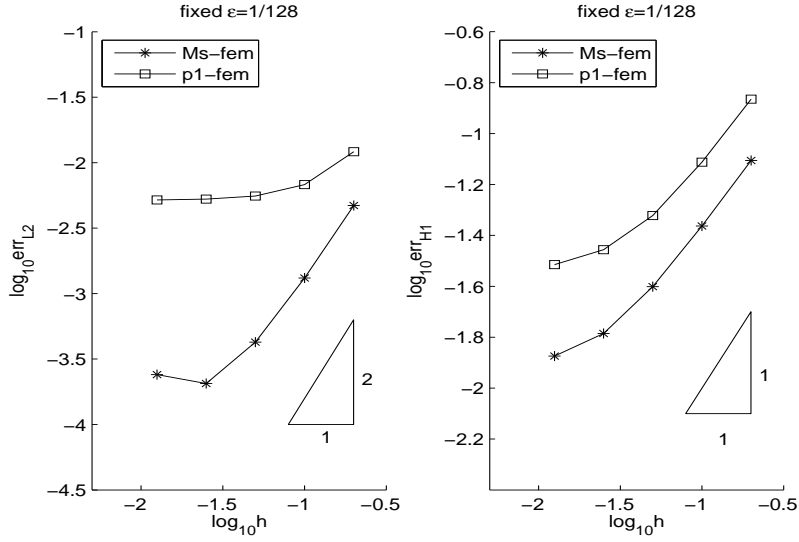}}
\caption{Convergence behavior of the method and standard $P_1$-element
 approximation for the second example.}
\label{fig:4.3}
\end{figure}

\textbf{Third Example\;}In this example, we test the problem with a non-periodic rough boundary, which is not covered by
our theoretical results, while the method works as well.
The domain is
\[
\Oms=\set{x\in\R^2}{0<x_1<1,\dfrac{\eps}{10}(\gamma(x_1)-1)<x_2<1}
\]
with $\gamma$ an oscillating function defined as follows.
We firstly divide the interval $(0,1)$ uniformly as
\(0=s_0<s_1<\cdots<s_M=1\) with $M={1}/{\eps}=128$.  The function $\gamma$ is set to be a piecewise continuous linear function over such  partition, with $\gamma(s_i)$, $i=0,\cdots,M$, a series of pseudo-random numbers between $0$ and $1$ generated by a standard C++ library function. The rough boundary
\[
\Gamma_{\eps}=\set{x\in\R^2}{0<x_1<1,x_2={\eps}(\gamma(x_1)-1)/{10}}.
\]
We choose $f=1,g=0$, and impose a homogeneous Dirichlet boundary condition on $\Ga_D$.
We use a uniform triangular grid with the mesh size $h$ varies from $1/5$ to $1/160$.
The results for MsFEM is reported in Fig.~\ref{fig:4.2}. Similar to the
previous examples, we get optimal convergence rate when $h>2\eps$.
The resonance errors becomes dominate for the $L^2$ error when $h=1/80$, but still small for the $H^1$ error even when $h=1/160$.
This might indicate the resonance error for the $L^2$ error is more pronounced.
\begin{figure}[htb!]
\hspace{-0cm}
\resizebox{!}{7.5cm} 
    {\includegraphics[height=7.5cm,width=12cm]{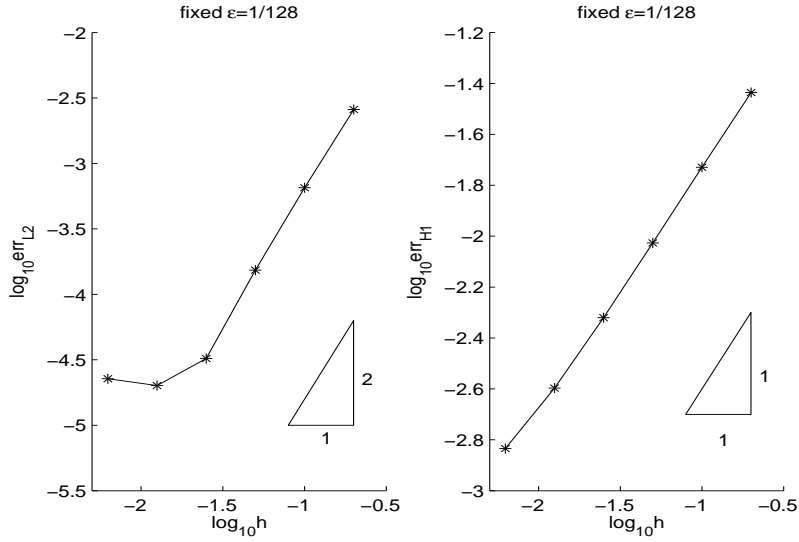}}
\caption{Convergence behavior of the method for the third example.}
\label{fig:4.2}
\end{figure}

\textbf{Fourth Example\;} In this example, we test the problem with a discontinuous right-hand side function $f$ as well as a non-periodic rough boundary, which is slightly more general than that in the previous example.
The domain is
\[
\Oms=\set{x\in\R^2}{0<x_1<1, \eps(\gamma(x_1)-1)<x_2<1}
\]
with $\gamma$ an oscillating function defined as follows.
We firstly divide the interval $(0,1)$ by
\(0=s_0<s_1<\cdots<s_M=1\) with $M={1}/{\eps}=128$. In comparison with the third example,
here $s_1,\cdots, s_{M-1}\in (0,1)$ are chosen randomly. For that purpose, we generate a series of pseudo-random numbers between $0$ and $1$ generated by a standard C++ library function. Then we sort
them in a sequence and denoted as $\{s_i\}$.
The function $\gamma$ is set to be a piecewise continuous linear function over such  partition, with $\gamma(s_i)$, $i=0,\cdots,M$, also a series of pseudo-random numbers between $0$ and $1$. The rough boundary
\[
\Gamma_{\eps}=\set{x\in\R^2}{0<x_1<1,x_2={\eps}(\gamma(x_1)-1)}.
\]
We choose $g=0$, and impose a homogeneous Dirichlet boundary condition on $\Ga_D$.
In addition, we choose a discontinuous right hand side function as
\begin{equation*}
f(x)=\left\{
\begin{array}{ll}
1, & \hbox{if } x_1<\frac{1}{2};\\
-1, & \hbox{if } x_1\geq \frac{1}{2}.
\end{array}
\right.
\end{equation*}
In this  case, the homogenized solution $u_0\in H^2(\Omega_0)$ and but not in $H^3(\Omega_0)$, i.e., it does not satisfy
the regularity assumption in our theoretical analysis.

We use a triangular grid with the mesh size $h$ varies from $1/5$ to $1/80$.
The results for MsFEM is reported in Fig.~\ref{fig:4.4}. Similar to the
previous examples, we get optimal convergence rate when $h>2\eps$.
\begin{figure}[htb!]
\hspace{0cm}
\resizebox{!}{7.5cm} 
    {\includegraphics[height=7.5cm,width=12cm]{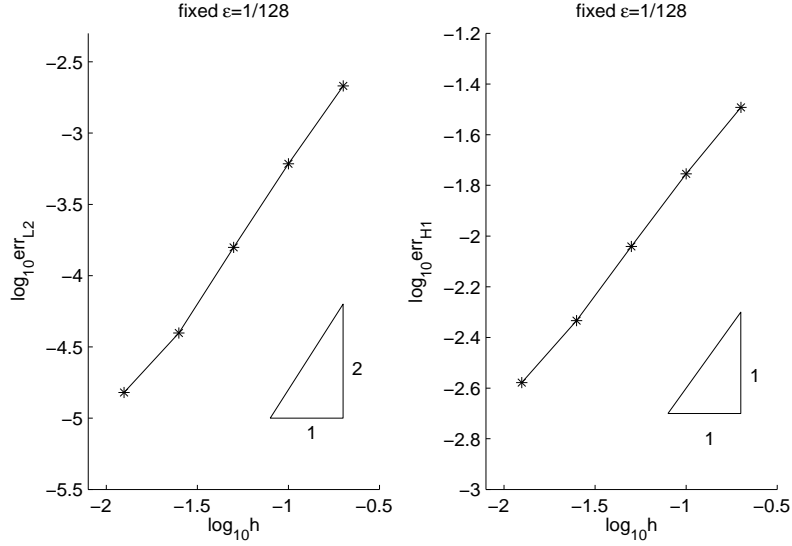}}
\caption{Convergence behavior of the method for the fourth example.}
\label{fig:4.4}
\end{figure}
We also calculate the 2-norm condition number of the resulting linear system. As shown in Fig.~\ref{fig:4.5},
the condition number scales as $\mc{O}(h^{-2})$, which is optimal and is independent of
the microstructure.
\begin{figure}[htb!]
\hspace{2cm}
\resizebox{!}{7.5cm} 
    {\includegraphics[height=5.5cm,width=6cm]{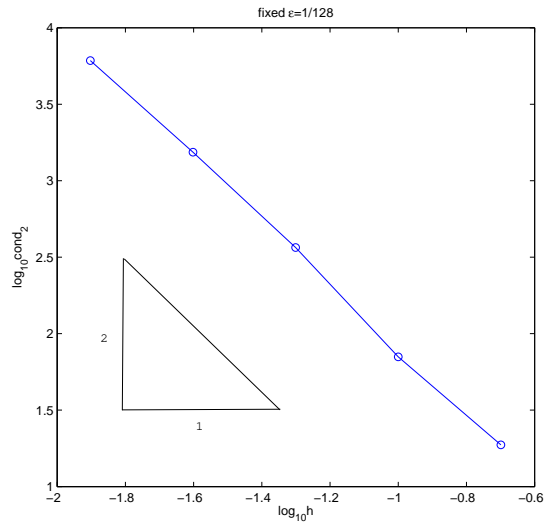}}
\caption{Condition numbers of the MsFEM system.}
\label{fig:4.5}
\end{figure}

{\centering \bf Acknowledgement.}
We would like to thank the anonymous referees for their many suggestions, that
help us to improve the paper.
\bigskip
\bibliographystyle{amsplain}
\bibliography{rgbd}

\providecommand{\bysame}{\leavevmode\hbox to3em{\hrulefill}\thinspace}
\providecommand{\MR}{\relax\ifhmode\unskip\space\fi MR }
\providecommand{\MRhref}[2]{%
  \href{http://www.ams.org/mathscinet-getitem?mr=#1}{#2}
}
\providecommand{\href}[2]{#2}
\begin{thebibliography}{10}

\bibitem{abdulle2005heterogeneous}
A.~Abdulle and C.~Schwab, \emph{Heterogeneous multiscale {FEM} for diffusion
  problems on rough surfaces}, Multiscale Model. \& Simul. \textbf{3} (2005),
  195--220.

\bibitem{AdamsFournier:2003}
R.A. Adams and J.J.F. Fournier, \emph{Sobolev {S}paces}, Academic Press, 2nd
  ed., 2003.

\bibitem{AllaireAmar:1999}
G.~Allaire and M.~Amar, \emph{Boundary layer tails in periodic homogenization},
  ESAIM: Control, Opt. Cal. Var. \textbf{4} (1999), 209--243.

\bibitem{amirat2013effective}
Y.~Amirat, O.~Bodart, U.~De~Maio, and A.~Gaudiello, \emph{Effective boundary
  condition for stokes flow over a very rough surface}, J. Diff. Eqn.
  \textbf{254} (2013), 3395--3430.

\bibitem{babuvska1994special}
I.~Babu{\v{s}}ka, G.~Caloz, and J.~Osborn, \emph{Special finite element methods
  for a class of second order elliptic problems with rough coefficients}, SIAM
  J. Numer. Anal. \textbf{31} (1994), 945--981.

\bibitem{Babuska:1971}
I.~Babu\v{s}ka, \emph{The finite element method for elliptic equations with
  discontinuous coefficients}, Computing \textbf{5} (1970), 207--213.

\bibitem{Babuslka:1976}
\bysame, \emph{Homogenization and its applications, mathematical and
  computational problems}, Numerical {S}olutions of {P}artial {D}ifferential
  {E}quations-III (B.~Hubbard, ed.), Academic Press, New York, 1976,
  pp.~89--116.

\bibitem{BabuskaOsborn:1983}
I.~Babu\v{s}ka and J.E. Osborn, \emph{Generalized finite element methods: their
  performance and their relation to mixed method}, SIAM J. Numer. Anal.
  \textbf{20} (1983), 510--536.

\bibitem{bucur2008asymptotic}
D.~Bucur, E.~Feireisl, {\v{S}}.~Ne{\v{c}}asov{\'a}, and J.~Wolf, \emph{On the
  asymptotic limit of the navier--stokes system on domains with rough
  boundaries}, J. Diff. Eqn. \textbf{244} (2008), 2890--2908.

\bibitem{casado2013asymptotic}
J.~Casado-D{\'\i}az, M.~Luna-Laynez, and F.J. Su{\'a}rez-Grau, \emph{Asymptotic
  behavior of the navier--stokes system in a thin domain with navier condition
  on a slightly rough boundary}, SIAM J. Math. Anal. \textbf{45} (2013),
  1641--1674.

\bibitem{Ciarlet:1978}
P.~G. Ciarlet, \emph{The {F}inite {E}lement {M}ethod for {E}lliptic
  {P}roblems}, North-Holland, Amsterdam, 1978.

\bibitem{Ebook:2011}
W.~E, \emph{Principles of {M}ultiscale {M}odeling}, Cambridge University Press,
  Cambridge, UK, 2011.

\bibitem{EEnquist:2003}
W.~E and B.~Engquist, \emph{The heterogeneous multiscale methods}, Commun.
  Math. Sci. \textbf{1} (2003), 87--132.

\bibitem{EMingZhang:2004}
W.~E, P.B. Ming, and P.W. Zhang, \emph{Analysis of the heterogeneous multiscale
  method for elliptic homogenization problems}, J. Amer. Math. Soc. \textbf{18}
  (2005), 121--156.

\bibitem{efendiev2013multiscale}
Y.~Efendiev, J.~Galvis, and M.~Pauletti, \emph{Multiscale finite element
  methods for flows on rough surfaces}, Comm. Comput. Phys. \textbf{14} (2013),
  979--1000.

\bibitem{efendiev2009multiscale}
Y.~Efendiev and T.-Y. Hou, \emph{Multiscale finite element methods: theory and
  applications}, vol.~4, Springer Science \& Business Media, 2009.

\bibitem{Elfverson13}
D.~Elfverson, E.~H. Georgoulis, A.~M{\aa}lqvist, and D.~Peterseim,
  \emph{Convergence of a discontinuous galerkin multiscale method}, SIAM J.
  Numer. Anal. \textbf{51} (2013), 3351--3372.

\bibitem{Elfverson15}
D.~Elfverson, M.~G. Larson, and A.~M\r{a}lqvist, \emph{Multiscale methods for
  problems with complex geometry}, arXiv:1509.03991v1 (2015).

\bibitem{Friedman:1997}
A.~Friedman, B.~Hu, and Y.~Liu, \emph{A boundary value problem for the poisson
  equation with multi-scale oscillating boundary}, J. Diff. Eqn. \textbf{137}
  (1997), 54--93.

\bibitem{Gobbert:1998}
M.~K. Gobbert and C.~Ringhofer, \emph{An asymptotic analysis for a model of
  chemical vapor deposition on a micro structured surface}, SIAM J. Appl. Math.
  \textbf{58} (1998), 737--752.

\bibitem{HackbuschSauter:1997}
W.~Hackbusch and S.A. Sauter, \emph{Composite finite elements for the
  approximation of pdes on domains with complicated micro-structures}, Numer.
  Math. \textbf{75} (1997), 447--472.

\bibitem{HackbuschSauter:1999}
\bysame, \emph{Composite finite elements for problems containing small
  geometrical details}, Comput Visual Sci. \textbf{1} (1999), 15--25.

\bibitem{Henning14}
P.~Henning and A.~M{\aa}lqvist, \emph{Localized orthogonal decomposition
  techniques for boundary value problems}, SIAM J. Sci. Comput. \textbf{36}
  (2014), 1609--1634.

\bibitem{HouWu:1997}
T.-Y. Hou and X.H. Wu, \emph{A multiscale finite element method for elliptic
  problems in composite materials and porous media}, J. Comput. Phys.
  \textbf{134} (1997), 169--189.

\bibitem{HouWuCai:1999}
T.-Y. Hou, X.H. Wu, and Z.~Cai, \emph{Convergence of a multiscale finite
  element method for elliptic problems with rapidly oscillating coefficients},
  Math. Comp. \textbf{68} (1999), 913--943.

\bibitem{hughes1995multiscale}
T.J.R. Hughes, \emph{Multiscale phenomena: {G}reen's functions, the
  {D}irichlet-to-{N}eumann formulation, subgrid scale models, bubbles and the
  origins of stabilized methods}, Comput. Meth. Appl. Mech. Eng. \textbf{127}
  (1995), 387--401.

\bibitem{JagerMikelic:2001}
W.~Jager and A.~Mikeli\'c, \emph{On the roughness-induced effective boundary
  conditions for an incompressible viscous flow}, J. Diff. Eqn. \textbf{170}
  (2001), 96--122.

\bibitem{KohlerPapanicalouVaradhan:1981}
W.~Kohler, G.~Papanicolaou, and S.~Varadhan, \emph{Boundary and interface
  problems in regions with very rough boundaries}, Multiple {S}cattering and
  {W}aves in {R}andom {M}edia (P.L. Chow, W.E. Kohler, and G.~Papanicalou,
  eds.), North-Holland Publishing Company, 1981, pp.~165--197.

\bibitem{Madureira:2007}
A.~L. Madureira and F.~Valentin, \emph{Asymptotics of the poisson problem in
  domains with curved rough boundaries}, SIAM J. Math. Anal. \textbf{38}
  (2007), 1450--1473.

\bibitem{Madureira:2008}
A.L. Madureira, \emph{A multiscale finite element method for partial
  differential equations posed in domains with rough boundaries}, Math. Comp.
  \textbf{78} (2008), 25--34.

\bibitem{Malqvist11}
A.~M{\aa}lqvist, \emph{Multiscale methods for elliptic problems}, Multiscale
  Model. Simul. \textbf{9} (2011), 1064--1086.

\bibitem{Malqvist14}
A.~M{\aa}lqvist and D.~Peterseim, \emph{Localization of elliptic multiscale
  problems}, Math. Comp. \textbf{83} (2014), 2583--2603.

\bibitem{Malqvist15}
\bysame, \emph{Computation of eigenvalues by numerical upscaling}, Numer. Math.
  \textbf{130} (2015), 337--361.

\bibitem{Mclean:2000}
W.~Mc{L}ean, \emph{Strongly {E}lliptic {S}ystems and {B}oundary {I}ntegral
  {E}quations}, Cambridge University Press, Cambridge, UK, 2000.

\bibitem{Mikelic:2009}
A.~Mikeli\'c, \emph{Rough boundaries and wall laws}, {Q}ualitative {P}roperties
  of {S}olutions to {P}artial {D}ifferential {E}quations (E.~Feireisl,
  P.~Kaplicky, and J.~Malek, eds.), Lecture notes of Necas Center for
  mathematical modeling, Matfyzpress, Publishing House of the Faculty of
  Mathematics and Physics Charles University in Prague, Prague, 2009,
  pp.~103--134.

\bibitem{NeussNeussRaduMikelic:2006}
N.~Neuss, M.~Neuss-Radu, and A.~Mikeli\'c, \emph{Effective laws for the poisson
  equation on domains with curved oscillating boundaries}, Applicable Anal.
  \textbf{85} (2006), 479--502.

\bibitem{NevardKeller:1997}
J.~Nevard and J.B. Keller, \emph{Homogenization of rough boundaries and
  interfaces}, SIAM J. Appl. Math. \textbf{57} (1997), 1660--1686.

\bibitem{NicaiseSauter:2006}
S.~Nicaise and S.A. Sauter, \emph{Efficient numerical solution of {N}eumann
  problems on complicated domains}, Calcolo \textbf{43} (2006), 95--120.

\bibitem{OleinikShamayevYosifian:1992}
O.A. Oleinik, A.S. Shamayev, and G.A. Yosifian, \emph{Mathematical {P}roblems
  in {E}lasticity and {H}omogenization}, North-Holland, Amsterdam, 1992.

\bibitem{OwhadiZhang:2007}
H.~Owhadi and L.~Zhang, \emph{Metric based up-scaling}, Comm. Pure Appl. Math.
  \textbf{60} (2007), 675--723.

\bibitem{peterseim2011}
D.~Peterseim and S.~Sauter, \emph{Finite element methods for the stokes problem
  on complicated domains}, Comput. Meth. Appl. Mech. Engrg \textbf{200} (2011),
  2611--2623.

\bibitem{rech2006}
M.~Rech, S.~Sauter, and A.~Smolianski, \emph{Two-scale composite finite element
  method for dirichlet problems on complicated domains}, Numer. Math.
  \textbf{102} (2006), 681--708.

\bibitem{Verfurth96}
R.~Verf\"{u}rth, \emph{A review of a posteriori error estimation and adaptive
  mesh-refinement techniques}, Wiley, 1996.

\bibitem{XuWang:2010}
X.~Xu and X.~P. Wang, \emph{Derivation of the wenzel and cassie equations from
  a phase field model for two phase flow on rough surface}, SIAM J. Appl. Math
  \textbf{70} (2010), 2929--2941.

\end{thebibliography}
\end{document}